\newcommand{\skl}[1]{(#1)}
\newtheorem{theorem}{Theorem}
\newtheorem{problem}[theorem]{Problem}
\newtheorem{proposition}[theorem]{Proposition}
\newtheorem{remark}[theorem]{Remark}
\newtheorem{example}[theorem]{Example}
\theoremstyle{definition}
\newtheorem{condition}[theorem]{Condition}
\newtheorem{definition}[theorem]{Definition}
\newcommand{\risk}{R}
\newcommand{\al}{\alpha}
\newcommand{\la}{\lambda}
\newcommand{\eps}{\epsilon}
\newcommand{\nullnet}{\mathbf{N}}  
\newcommand{\nun}{{\bm \Phi}}
\newcommand{\Cone}{\mathbf C}
\newcommand{\R}{\mathbb R}
\newcommand{\N}{\mathbb N}
\newcommand{\X}{\mathbb X}
\newcommand{\Y}{\mathbb Y}
\newcommand{\Fo}{\mathbf{F}}
\newcommand{\Ao}{\mathbf{A}}
\newcommand{\Ko}{\mathbf{K}}
\newcommand{\Bo}{\mathbf{B}}
\newcommand{\Ho}{\mathbf{H}}
\newcommand{\So}{\mathbf{S}}
\newcommand{\Po}{\mathbf P}
\newcommand{\Do}{\mathbf D}
\newcommand{\Ro}{\mathbf R}
\newcommand{\M}{\mathcal M}
\newcommand{\reg}{\mathcal{R}}
\newcommand{\tik}{\mathcal{T}}
\newcommand{\tikl}{\mathcal{L}}
\newcommand{\B}{\boldsymbol{\Delta}}
\newcommand\sabs[1]{{\lvert#1\rvert}}
\newcommand{\Id}{\operatorname{Id}}
\def\plus{{\boldsymbol{\dag}}}
\newcommand\norm[1]{\Vert#1\Vert}
\newcommand\set[1]{{\{#1\}}}
\newcommand\kl[1]{{\left(#1\right)}}
\newcommand\snorm[1]{\Vert#1\Vert}
\newcommand{\ran}{\operatorname{ran}}
\DeclareMathOperator*{\argmin}{arg\,min}
\DeclareMathOperator{\id}{Id}
\DeclareMathOperator{\Dom}{dom}
\DeclareMathOperator{\func}{\mathbb F}
\DeclarePairedDelimiter{\innerprod}{\langle}{\rangle}
\DeclarePairedDelimiter{\abs}{\lvert}{\rvert}
\newcommand{\signal}{{\bf x}}
\newcommand{\zsignal}{{\bf z}}
\newcommand{\data}{\mathbf y}
\newcommand{\zz}{\mathbf z}
\newcommand{\noise}{\xi}
\colorlet{lred}{red!40}
\colorlet{lgreen}{green!40}
\colorlet{lblue}{blue!40}
\definecolor{bananamania}{rgb}{0.98, 0.91, 0.71}
\newcommand{\XX}{\mathbb X}
\newcommand{\YY}{\mathbb Y}
\newcommand{\DD}{\mathbb D}
\newcommand{\MM}{\mathbb M}
\newcommand{\Wo}{\mathcal V}
\newcommand{\simm}{\mathcal{D}}  
\newcommand{\dd}{\mathbf{d}}
\newcommand{\UU}{\mathbb U}
\newcommand{\VV}{\mathbb V}
\newcommand{\nlf}{\psi}          
\newcommand{\NN}{{\mathbf{\Phi}}}
\newcommand{\XXX}{\mathbf{\Xi}}
\newcommand{\edot}{\,\cdot\,}
\newcommand{\A}{\mathbf A}
\newcommand{\No}{\mathcal N}
\newcommand{\D}{\mathcal D}
\newcommand{\nlo}{\sigma}       
\newcommand{\breg}{\mathcal{B}}   
\newcommand{\modt}{\nu}
\newcommand{\decoder}{\mathbf E}
\newcommand{\ddecoder}{\mathbf D}
\author{Markus~Haltmeier}
\affil{Department of Mathematics.
University of Innsbruck
Technikerstrasse 13, 6020 Innsbruck, Austria.
 {\tt markus.haltmeier@uibk.ac.at}}
\author{Linh Nguyen}
\affil{ Department of Mathematics, University of Idaho,
Moscow, ID 83844, US, {\tt lnguyen@uidaho.edu}
}
\numberwithin{equation}{section}
\numberwithin{figure}{section}
\numberwithin{theorem}{section}
\title{Regularization of Inverse Problems by Neural Networks}
\date{June 6, 2020}							
\begin{document}
\maketitle

\begin{abstract}
Inverse problems arise in a variety of imaging applications including computed  tomography,  non-destructive testing, and  remote sensing.  The characteristic features  of inverse problems are  the non-uniqueness and instability of their solutions.  Therefore,  any reasonable solution method requires the use of regularization tools that  select specific solutions and at the same  time stabilize the inversion process.  Recently, data-driven methods using deep learning techniques and neural networks demonstrated to significantly outperform classical solution methods for inverse problems. In this chapter, we give an overview of  inverse problems and demonstrate the necessity of    regularization concepts for  their solution. We show that neural networks can be used for the data-driven solution of inverse problems and review  existing deep learning methods for inverse problems. In particular, we view these deep learning methods from the perspective of regularization theory, the mathematical foundation of stable solution methods for inverse problems.  This chapter is  more than just a review as many of the presented theoretical results extend existing ones.

 \medskip \noindent \textbf{Keywords:}
 Inverse problems, deep learning, neural networks, regularization theory, ill-posedness, stability, theoretical foundation.  
\end{abstract}

\tableofcontents

 \setlength{\parskip}{0.8em}

\section{Introduction} \label{sec:intro}

The solution of inverse problems arises in a variety of practically important applications including medical imaging, computer vision, geophysics as well as many other branches of pure and applied sciences. Inverse problems are most efficiently formulated as an estimation  problem of the form  
\begin{equation}\label{eq:ip}
	\text{recover} \quad  \signal^* \in \XX \quad \text{from data } \quad
	\data = \A( \signal^* )   +  \noise  \in \YY    \,.
\end{equation}
Here $\Ao  \colon  \XX \to \YY$ is a mapping between normed spaces, $\signal^* \in  \XX$ is  the true unknown solution, $\data$ represents the given data, and $\noise$ is an unknown data perturbation. In this context, the application of the operator $\A$ is referred to as the forward operator or forward problem, and solving \eqref{eq:ip} is the corresponding inverse problem.  In the absence of noise where $\xi = 0$ we refer  to $\data = \A( \signal^* )$ as exact data,  and in the case where $\noise \neq 0$ we refer to  $\data$ as noisy data. 

Prime examples of inverse problems are image reconstruction problems, where the forward operator describes the data generation process depending on the image reconstruction modality. For example, in X-ray computed tomography (CT), the forward operator is the sampled Radon transform whereas in magnetic resonance imaging (MRI) the forward operator is the sampled Fourier transform. Reconstructing the diagnostic image from experimentally collected data amounts to solving an inverse problem of the form \eqref{eq:ip}. In these and other applications, the underlying forward operator is naturally formulated between infinite dimensional spaces, 
because the object to be reconstructed is a function of a continuous spatial variable.  Even though the numerical solution is performed in a finite dimensional discretization, the mathematical properties of the continuous formulation are crucial for understanding and improving image formation algorithms.

\subsection{Ill-posedness}

The inherent character of inverse problems is their ill-posedness.
This means that even in the  case of  exact data, the solution of \eqref{eq:ip} is either not unique, not existent,  or does not stably depend on the given data.  More formally, for an inverse problem  at least one of the following three unfavorable properties holds:
\begin{enumerate}[label=(I\arabic*)]
\item \label{ip1} 
\textsc{Non-uniqueness:} 
For some $\signal_1^* \neq  \signal_2^* \in \XX$ we have $\Ao (\signal^*_1) = \Ao (\signal^*_2)$.

\item \label{ip2} 
\textsc{Non-existence:} For some $\data \in \YY$, the equation $\Ao(\signal) = \data$ has no solution.

\item \label{ip3} 
\textsc{Instability:} 
Smallness of $\norm{\Ao (\signal^*_1) - \Ao (\signal^*_2)}$ does not imply smallness of $\norm{\signal^*_1  - \signal^*_2}$.      
\end{enumerate} 
These conditions imply that the forward operator does not have a continuous inverse which could be used to directly solve \eqref{eq:ip}. Instead, regularization methods have to be applied which result in stable methods for solving  inverse problem.

Regularization methods approach the ill-posedness by two steps. 
Firstly,  to address non-uniqueness  and non-existence issues \ref{ip1}, \ref{ip2}, one restricts the image and pre-image space of the forward operator to sets  $\MM \subseteq \XX $ and $\ran(\Ao) \subseteq \YY$ such that the restricted forward operator  $\Ao_{\rm res} \colon  \MM \to \ran(\Ao)$ becomes bijective. For any exact data, the equation  $\Ao (\signal)  = \data$ then has a unique solution in $\MM$  which is given by the inverse of the restricted forward operator applied to $\data$.  Secondly, in order to  address the instability issue \ref{ip3}, one considers a family of continuous operators $\Bo_ \alpha \colon \YY \to \XX$ for $\alpha > 0 $ that  converge to $\Ao_{\rm res}^{-1}$ in a suitable sense;  see Section~\ref{sec:background}  for  precise definitions.

Note that the choice of the set  $\MM$ is crucial as it represents the class of desired reconstructions and acts as selection criteria for picking a particular solution of  the given inverse problem. A main challenge is that this class  is actually unknown or at least it cannot be described properly. For example, in CT for medical imaging,  set of desired solutions   represents the set of all functions corresponding to  spatially attenuation inside patients,  a function class that is clearly challenging, if not impossible, to describe in mathematical terms.   
              
Variational regularization and variants \cite{scherzer2009variational} have been the most successful class of regularization methods for solving  inverse problems. Here, $\MM$ is  defined as solutions having a small value of a certain regularization functional that can be interpreted as a measure  for the deviation  from desired solutions. Various regularization functionals have been  analyzed for inverse problems, including  Hilbert space norms \cite{engl1996regularization}, total variation \cite{acar1994analysis} and   sparse $\ell^q$-penalties \cite{daubechies2004iterative,grasmair2008sparse}.  
Such handcrafted regularization functionals  have limited complexity and are unlikely to accurately model complex signal classes arising in applications such as medical imaging. On the other hand, their regularization effects are well understood, efficient numerical algorithms have been developed for their realization, they work reasonably well in practice, and they have been rigorously analyzed mathematically.

\subsection{Data-driven reconstruction}

Recently data-driven methods based on neural networks and deep learning demonstrated to significantly outperform existing variational and iterative reconstruction algorithms for solving inverse problems. The essential  idea is to use neural networks to define a class $(\Ro_\theta)_{\theta \in \Theta}$ of reconstruction networks $\Ro_\theta \colon  \YY \to \XX$ and to select the parameter  vector $\theta \in \Theta$   of the network in data-driven manner.  The selection is based on 
a set of training data $(\signal_1, \data_1), (\signal_2, \data_2), \dots, (\signal_N, \data_N) $ where $\signal_i \in \MM$ are desired reconstructions and  $\data_i  =  \Ao(\signal_i^*) + \noise_i \in  \YY$ are corresponding data. Even if the set  $\MM$  of desired reconstructions  is unknown, the available samples $\signal_1, \dots, \signal_N$ can be used to select the particular reconstruction method. 
A typical selection strategy is to minimize  a penalized  least squares functional having the form 
\begin{equation} \label{eq:risk-lsq}
 	 \theta^* 
	\in \argmin_\theta  
 	\left\{ \frac{1}{N} \sum_{i=1}^N  \bigl\Vert \signal_i  - \Ro_\theta (\data_i)  \bigr\|^2  +  P( \theta )  \right\}\,.
\end{equation}
The final neural network based reconstruction method is then given by 
$\Ro_{\theta^*} \colon  \YY \to \XX$ and is such that in average  it performs well on the given training data set.    

Existing deep learning-based methods include post-processing networks \cite{han2016deep,jin2017deep}, null-space networks \cite{schwab2019deep,schwab2020big}, variational networks \cite{kobler2017variational}, iterative networks \cite{ADMMnet,adler2017solving,aggarwal2018modl}, network cascades \cite{kofler2018u,schlemper2017deep} and learned regularization functional   \cite{li2018nett,Lunz2018,obmann2020deep}.   
We refer  to the review  \cite{arridge2019solving} for other  
 data-driven reconstruction methods such as  GANs \cite{bora2017compressed,mardani2018deep}, dictionary learning, deep basis pursuit \cite{sulam2019multi} or deep image priors \cite{ulyanov2018deep,van2018compressed,dittmer2019regularization}, that we do not touch in this chapter.  Post-processing networks  and null-space networks are explicit, where  the reconstruction network  is given explicitly and its  parameters are  trained to fit the given training data. Methods using  learned regularizers are implicit and  the  reconstruction  network  $\Ro_\theta (\data)=  \argmin \tik_{\theta, \data} $  is defined by minimizing a properly trained Tikhonov functional  $\tik_{\theta, \data} \colon \XX \to [0, \infty]$. Variational networks and iterative networks are in  between, where $ \argmin \tik_{\theta, \data}$ is approximated  via an iterative scheme using $L$ steps.

 Any reasonable method for solving an inverse problem, including all learned reconstruction schemes,  has to include some form of regularization. However, regularization may be imposed implicitly, even without noticing by the researcher developing the algorithm. Partially, this is the case because discretization,  early stopping, or other techniques to numerically stabilizing an optimization algorithm  at the same time has a regularization effect on  the underlying inverse problem. Needless to say, understanding  and analyzing where exactly the  regularization effect comes from,  will increase reliability of any algorithm and allows its further improvement. In conclusion, any data-driven reconstruction method has to include  either explicitly or implicitly a form of regularization. In this chapter, we will analyze the regularization properties of various deep learning methods for solving inverse problems.

\subsection{Outline}

The outline of this chapter is as follows. In Section~\ref{sec:background} we present the background of inverse problems and deep learning. In Section~\ref{sec:regnet}  we analyze direct neural network based  reconstructions whereas  in Section~\ref{sec:nett} we study variational and iterative  reconstruction methods  based on neural  networks. The chapter concludes with a discussion and some final remarks given in Section~\ref{sec:conclusion}.  While  the concepts presented in the  subsequent sections are known, most of the presented  results extend existing  ones.  Therefore, this chapter is much more than just a review over existing results.

For the sake of clarity, in  this chapter we only study linear inverse problems  even several results can be extended non-linear problems as well. We will provide remarks pointing to such results. Throughout, we allow an infinite dimensional setting, because in many applications the unknowns to be recovered as well as the data are  most naturally modeled as functions which lie in infinite dimensional spaces $\XX$ and $\YY$. However, everything said in this chapter applies to finite dimensional spaces as well.  In limited data problems, such as sparse view CT, the finite dimension of the data space $\YY$ is even an intrinsic part of the forward model.  Therefore, the reader not familiar with infinite dimensional vector space can  think of  $\XX$ and  $\YY$  as finite dimensional vector spaces  each equipped with a  standard vector norm.

\section{Preliminaries}
\label{sec:background}

In this section, we provide necessary background on linear inverse problems, their regularization and their solution with neural networks.

Throughout the following,  $\XX$ and $\YY$ are  Banach spaces.
We study solving inverse problems of the form \eqref{eq:ip} in a deterministic setting with a bounded linear forward operator $\Ao  \colon  \XX \to \YY$. Hence we aim for 
estimating   the  unknown signal $\signal^* \in \XX$ from the available data $ \data = \A( \signal^* )   +  \noise $, where   $\noise \in  \YY$ is the  noise that is assumed to satisfy an estimate of the form $\norm{\noise} \leq \delta$. Here  $\delta \geq 0$ is called  the  noise level and in the case $\delta = 0$ we call $\data = \A( \signal^* )$ the exact data.

\subsection{Right inverses}

As we have explained  in the  introduction, the main feature of 
inverse problems is their ill-posedness.  Regularization methods approach the ill-posedness by two steps. In the first step, they address \ref{ip1} and \ref{ip2} by restricting the image and the pre-image spaces which gives a certain right inverse defined on $\ran(\Ao)$. In order to address the instability  issue  \ref{ip3}, regularization methods are applied for stabilization.  We first consider right inverse and their instability, and consider the regularization in the following subsection.

\begin{definition}[Right inverse]  \label{def:right}
A possibly non-linear mapping $\Bo \colon \ran(\Ao) \subseteq \YY \to \XX$ is called  right  inverse of $\Ao$ 
if $\Ao( \Bo (\data)) = \data$ for all $\data \in \ran(\Ao)$.  
\end{definition}

Clearly, a right inverse always exists because  for any  $\data \in \ran(\Ao)$ there exists an element $\Bo \data  \coloneqq \signal$ such that  $\Ao\signal = \data$. However, in general no continuous right inverse exists. More precisely, we have the following result (compare \cite{nashed1987inner}). 

\begin{proposition}[Continuous right inverses] \label{prop:right}
Let $\Bo  \colon \ran(\Ao) \to \XX$ be a continuous right inverse.  Then  $\ran(\Ao)$ is closed.
\end{proposition}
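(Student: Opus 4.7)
The plan is to show $\overline{\ran(\Ao)} \subseteq \ran(\Ao)$ by a sequential argument. I would start from $\data \in \overline{\ran(\Ao)}$ and fix a sequence $(\data_n)_{n \in \N} \subseteq \ran(\Ao)$ with $\data_n \to \data$ in $\YY$. Setting $\signal_n \coloneqq \Bo(\data_n)$, the right inverse identity gives $\Ao(\signal_n) = \data_n$; hence if I can produce a subsequence $(\signal_{n_k})$ that converges to some $\signal \in \XX$, then continuity of $\Ao$ forces $\Ao(\signal) = \lim_k \data_{n_k} = \data$, placing $\data$ in $\ran(\Ao)$ and yielding closedness.

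The decisive step is therefore producing such a convergent subsequence. Continuity of $\Bo$ is assumed only on $\ran(\Ao)$, and in general $\data$ need not lie there, so one cannot simply pass the limit through $\Bo$. Instead, I would exploit continuity of $\Bo$ at each of the points $\data_n \in \ran(\Ao)$ by an adaptive thinning: choose $n_1 \coloneqq 1$; having selected $n_1 < \dots < n_k$, invoke continuity of $\Bo$ at $\data_{n_k}$ to obtain $\delta_k > 0$ such that $\data' \in \ran(\Ao)$ with $\|\data' - \data_{n_k}\| < \delta_k$ implies $\|\Bo(\data') - \Bo(\data_{n_k})\| < 2^{-k}$. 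Since $(\data_n)$ is Cauchy in $\YY$, some index $n_{k+1} > n_k$ satisfies $\|\data_{n_{k+1}} - \data_{n_k}\| < \delta_k$, so $\|\signal_{n_{k+1}} - \signal_{n_k}\| < 2^{-k}$.

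Telescoping, $(\signal_{n_k})$ is Cauchy in the Banach space $\XX$ and converges to some $\signal \in \XX$. Combining this with continuity of $\Ao$ and the identity $\Ao(\signal_{n_k}) = \data_{n_k} \to \data$ yields $\Ao(\signal) = \data$, so $\data \in \ran(\Ao)$ and the proof is complete.

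The main obstacle is precisely what forces the adaptive construction: $\Bo$ is only continuous on $\ran(\Ao)$ and is permitted to be non-linear, so standard maneuvers such as extending $\Bo$ by density to $\overline{\ran(\Ao)}$ are unavailable. Convergence of $(\signal_{n_k})$ must be manufactured using only pointwise continuity of $\Bo$ at the intermediate approximants $\data_{n_k}$, and the geometric $2^{-k}$ schedule is the natural device for turning this pointwise information into a Cauchy estimate in $\XX$.
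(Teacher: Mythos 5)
The reduction at the beginning and the conclusion at the end are fine: it suffices to produce a convergent subsequence of $\signal_n = \Bo(\data_n)$, and then continuity of $\Ao$ finishes the job. The gap is in the decisive step. You claim that, with $n_k$ already fixed and $\delta_k>0$ determined by continuity of $\Bo$ at $\data_{n_k}$, the Cauchy property of $(\data_n)_{n\in\N}$ supplies an index $n_{k+1}>n_k$ with $\norm{\data_{n_{k+1}}-\data_{n_k}}<\delta_k$. It does not: Cauchyness only controls $\norm{\data_m-\data_{m'}}$ when \emph{both} indices exceed a threshold $N(\delta_k)$, and your $n_k$ was chosen before $\delta_k$ was known, so it need not exceed that threshold. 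Concretely, if $\data_n=\data+n^{-1}v$ with $v\neq 0$, then $\inf_{m>n_k}\norm{\data_m-\data_{n_k}} = \norm{v}/(n_k(n_k+1))>0$, and since $\delta_k$ is dictated by the (possibly very poor) modulus of continuity of $\Bo$ at $\data_{n_k}$, nothing prevents $\delta_k$ from being smaller than this infimum, in which case no admissible $n_{k+1}$ exists. Reordering the choices does not help, because making $\norm{\data-\data_{n_k}}$ small relative to $\delta_k$ requires knowing $\delta_k$, which requires knowing $n_k$ — the construction is circular.

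The deeper problem is that your extraction step uses only pointwise continuity of $\Bo$ on $\ran(\Ao)$ and the convergence of $(\data_n)_{n\in\N}$; the right-inverse identity plays no role there. If the step were valid it would apply verbatim to $f(t)=1/t$ on $(0,1]$ with $t_n=1/n$, producing a convergent subsequence of $(n)_{n\in\N}$ — absurd. So no amount of tuning the $2^{-k}$ schedule can close the gap: pointwise continuity on a non-closed set gives no control near a missing boundary point. The paper's proof takes a different route, extending $\Bo$ by continuity to a map $\Ho\colon\overline{\ran(\Ao)}\to\XX$ with $\Ao\circ\Ho=\id_{\overline{\ran(\Ao)}}$; such an extension is exactly the kind of uniform (Cauchy-) continuity information — automatic for continuous linear maps, which is the case actually used later in Proposition~\ref{prop:rightlin} — that your pointwise argument lacks and that any correct proof must invoke in some form.
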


\begin{proof}
By continuity, $\Bo$ can be extended in a unique way  to a mapping   $\Ho \colon \overline{\ran(\Ao)} \to \XX$. Let 
$\signal_n \in \ran(\Ao)$ with $\signal = \lim_{n \to \infty} \signal_n \in \overline{\ran(\Ao)}$. The continuity of  $\Ho$ and $\Ao$   implies $\Ao \circ  \Ho = \id_{\overline{\ran(\Ao)}}$.  
Therefore  $\overline{\ran(\Ao)} = \ran(\Ao \circ \Ho)  \subseteq \ran(\Ao)  \subseteq \overline{\ran(\Ao)}$ which  shows that $ \ran(\Ao)  $ is closed.
\end{proof}

Proposition \ref{prop:right} implies that whenever  $\ran(\Ao)$ is non-closed, then $\Ao$ does not have a continuous right inverse.  
 
The next question we study is the existence of a linear right inverse.
For that purpose  recall that a mapping $\Po \colon \XX \to \XX$ is called projection  if  $\Po^2 = \Po$. If $\Po$ is a linear bounded projection, then      
$\ran(\Po) $ and  $ \ker(\Po)$ are closed subspaces and $\XX = \ran(\Po) \oplus  \ker(\Po)$.      

\begin{definition}[Complemented subspace]
A closed subspace  $\VV$  of $\XX$ is called complemented  in $\XX$, if there exists a bounded linear  projection $\Po$ with $\ran(\Po) =  \VV$
\end{definition}

A closed subspace  $\VV \subseteq \XX$ is complemented
if and only if there is another  closed subspace $\UU \subseteq \XX$ with  $\XX= \UU \oplus \VV$. 
In a Hilbert space, any  closed subspace is complemented, and $\XX =  \VV^\bot \oplus \VV$ with the orthogonal complement $\VV^\bot \coloneqq  \set{u \in \XX \mid \forall v \in \VV \colon  
\innerprod{u,v}  = 0}$.  However, as shown in \cite{lindenstrauss1971complemented}, in every Banach space that is  not isomorphic to a Hilbert space there exist closed subspaces which are not complemented. 

\begin{proposition}[Linear right inverses] \label{prop:rightlin}\mbox{}
\begin{enumerate}
\item\label{prop:rightlin1} $\Ao$ has a linear right inverse if and only if $\ker(\Ao)$ is complemented.

\item\label{prop:rightlin2}  A linear right inverse is continuous if and only $\ran(\Ao)$ is closed.  
\end{enumerate}
\end{proposition}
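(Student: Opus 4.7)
For part (1), my plan is to set up a correspondence between bounded linear right inverses of $\Ao$ and direct sum decompositions $\XX = \UU \oplus \ker(\Ao)$ with closed complement $\UU$.

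The direction $(\Leftarrow)$: let $\Po$ be a bounded projection with $\ker(\Po) = \ker(\Ao)$ and closed range $\UU$ (which one obtains, up to passing to $\mathrm{id}-\Po$, from the definition of complemented). I would check that $\Ao\vert_\UU \colon \UU \to \ran(\Ao)$ is a linear bijection---injective because $\UU \cap \ker(\Ao) = \{0\}$, and surjective because any $\data = \Ao\signal$ equals $\Ao\Po\signal$ using $\signal-\Po\signal \in \ker(\Ao)$. Its inverse $\Bo := (\Ao\vert_\UU)^{-1}$ is then a linear right inverse defined on $\ran(\Ao)$. For the direction $(\Rightarrow)$: given a linear right inverse $\Bo$, I would set $\Po := \Bo\Ao$ and verify $\Po^2 = \Bo(\Ao\Bo)\Ao = \Bo\Ao = \Po$ together with $\ker(\Po) = \ker(\Ao)$ (the nontrivial inclusion following from $\Bo\Ao\signal = 0 \Rightarrow \Ao\signal = \Ao\Bo\Ao\signal = 0$). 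Then $\mathrm{id}_\XX - \Po$ is a projection with range $\ker(\Ao)$, certifying that $\ker(\Ao)$ is complemented.

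Part (2) splits cleanly. The forward direction is a direct instance of Proposition \ref{prop:right}: any continuous right inverse forces $\ran(\Ao)$ to be closed. For the converse, assuming $\ran(\Ao)$ is closed makes it a Banach space, so the restriction $\Ao\vert_\UU \colon \UU \to \ran(\Ao)$ (with $\UU$ the closed complement produced in part (1)) is a bounded linear bijection between Banach spaces; the open mapping theorem then immediately yields continuity of its inverse $\Bo$.

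The delicate point lies in the forward direction of (1): the projection identities producing $\Po = \Bo\Ao$ are purely algebraic, so to conclude that $\mathrm{id}_\XX - \Po$ is \emph{bounded} (as the definition of complemented subspace requires) one must interpret the right inverse $\Bo$ as bounded throughout---a convention that is consistent with the standing Banach-space setting and with the use of the open mapping theorem invoked in part (2).
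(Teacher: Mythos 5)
Your proof follows the paper's argument essentially verbatim: the projection $\Bo \circ \Ao$ (equivalently $\id_\XX - \Bo\circ\Ao$) certifies complementation of $\ker(\Ao)$ in the forward direction, the inverse of the restriction of $\Ao$ to a closed complement furnishes the right inverse in the converse, and part (2) combines Proposition~\ref{prop:right} with the bounded inverse theorem exactly as the paper does. Your closing remark correctly identifies the one point the paper leaves implicit, namely that $\Bo$ must be read as bounded for $\Bo\circ\Ao$ to be a \emph{bounded} projection as required by the definition of a complemented subspace.
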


\begin{proof}\mbox{}
\ref{prop:rightlin1}  First suppose that $\Ao$ has a linear right inverse $\Bo \colon \ran(\Ao) \to \XX$.
For any    $\signal \in \XX$ we have  $(\Bo \circ \Ao)^2 (\signal) =  \Bo \circ   ( \Ao \circ \Bo) (\Ao (\signal) ) = (\Bo \circ \Ao) (\signal)$. Hence  $\Bo \circ \Ao$ is a linear bounded projection.  This implies  the topological decomposition  
$\XX = \ran(\Bo \circ \Ao ) \oplus  \ker( \Bo \circ \Ao)$
with closed subspaces $\ran(\Bo \circ \Ao )$ and  $\ker(\Bo \circ \Ao)$.
It holds   $\ker( \Bo \circ \Ao)  \supseteq \ker(\Ao)  = \ker(\Ao \circ \Bo \circ \Ao)
\supseteq  \ran(\Bo \circ \Ao )$ which  shows that $ \ker(\Ao)  = \ran(\Bo \circ \Ao) $ is complemented. Conversely let  $\ker(\Ao) $ be complemented and   write   $\XX = \XX_1 \oplus  \ker(\Ao)$. Then $\Ao_{\rm res} \colon \XX_1 \to \ran(\Ao)$ is bijective and therefore has a linear inverse $\Ao_{\rm res}^{-1}$ defining a right inverse for $\Ao$.  

\ref{prop:rightlin2}
For any continuous right inverse, $\ran(\Ao)$ is closed according   to Proposition~\ref{prop:right}. Conversely, let  $\Bo \colon \ran(\Ao) \to \XX$.
be linear right inverse and $\ran(\Ao)$ closed. In particular,   $\ker(\Ao) $ is  complemented and we can write $\XX = \XX_1 \oplus  \ker(\Ao)$.
The restricted mapping  $\Ao_{\rm res} \colon \XX_1 \to  
\overline{\ran(\Ao)}$ is bijective therefore bounded according to the bounded inverse theorem.  This implies that $\Bo$ is bounded, too.          
\end{proof}

In a Hilbert space $\XX$ the kernel  $\ker(\Ao)$ of a bounded linear operator is complemented, as any other closed subspace of $\XX$. Therefore, according to Proposition~\ref{prop:rightlin}, any bounded linear operator defined on a Hilbert space has a linear right inverse.   However, in a general Banach space this is not the case, as the following example shows.

\begin{example}[Bounded linear operator without linear right inverse]\label{eq:noinverse}
Consider the set $c_0(\N)$ of  all sequences converging to zero as a subspace of  the space $\ell^\infty(\N)$ of all bounded sequences $\signal \colon \N \to \R$ with the supremum  norm  $\norm{\signal}_\infty \coloneqq 
\sup_{ n \in \N} \abs{\signal(n)}$. Note that $c_0(\N) \subseteq \ell^\infty(\N)$ is a classic example for a  closed subspace that is not complemented in a Banach space, as first shown in  \cite{phillips1940linear}.  Now consider  the quotient space $\YY= \faktor{ \ell^\infty(\N)}{c_0(\N)} $ where  elements in $\ell^\infty(\N)$ are identified  if their difference is contained in $c_0(\N)$. Then the quotient map $\Ao \colon \ell^\infty(\N) \to \YY \colon \signal \mapsto [\signal]$ is clearly linear, bounded, and onto with   $\ker(\Ao) = c_0(\N)$. 
It is clear that a right inverse of $\Ao$ exists which can be constructed  by simply choosing any representative in $[\signal]$. However, because $c_0(\N)$ is not complemented, the kernel of $\Ao$ is not complemented and according to Proposition~\ref{prop:rightlin} no linear right inverse  of $\Ao$ exists.
\end{example}

At first glance it might be  surprising that bounded linear forward operators  do not always have linear right inverses.  However, following Example~\ref{eq:noinverse} one constructs bounded linear operators without linear right inverses for every Banach space that is not isomorphic to a Hilbert space. This  in particular includes  the function spaces $L^p(\Omega)$ with $p \neq 2$, where inverse problems are often formulated on.

\begin{proposition}[Right inverses in Hilbert spaces] \label{prop:right-hil}
Let $\XX$ be a Hilbert space and   let $\Po_{\ker(\Ao)} \colon \X \to \X$ denote the orthogonal projection onto $\ker(\Ao)$.  
\begin{enumerate}
\item\label{prop:right-h1}  
$\Ao$ has a unique linear right inverse $\Ao^\plus \colon \ran(\Ao) \to \XX$ with 
$\Ao \circ \Ao^\plus  = \id  - \Po_{\ker(\Ao)}$. 

\item\label{prop:right-h2}  
$\forall \data \in \ran(\Ao) \colon \Ao^\plus (\data) = \argmin\set{\norm{\signal} \mid \Ao (\signal) = \data }$.

\item\label{prop:right-h3}  
$\Ao^\plus$  is continuous if and only if $\ran(\Ao)$  
is closed.  

\item\label{prop:right-h4}   
If $\ran(\Ao)$ is non-closed, then any right inverse is discontinuous.  
\end{enumerate}
\end{proposition}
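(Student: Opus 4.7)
The plan is to exploit the Hilbert-space orthogonal decomposition $\XX = \ker(\Ao) \oplus \ker(\Ao)^\bot$ throughout. Since $\ker(\Ao)$ is a closed subspace of the Hilbert space $\XX$, it is automatically complemented by $\ker(\Ao)^\bot$, so Proposition~\ref{prop:rightlin} already guarantees the existence of \emph{some} linear right inverse; the real work is to single out the canonical one and to derive its analytic properties.

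For \ref{prop:right-h1}, I would observe that every $\signal \in \XX$ decomposes uniquely as $\signal = \signal_0 + \signal_1$ with $\signal_0 = \Po_{\ker(\Ao)}(\signal) \in \ker(\Ao)$ and $\signal_1 \in \ker(\Ao)^\bot$, and $\Ao(\signal) = \Ao(\signal_1)$. Consequently $\Ao|_{\ker(\Ao)^\bot} \colon \ker(\Ao)^\bot \to \ran(\Ao)$ is a linear bijection, and I define $\Ao^\plus$ to be its inverse. Then $\Ao^\plus \Ao (\signal) = \signal_1 = (\id - \Po_{\ker(\Ao)})(\signal)$, which establishes the claimed identity. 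Uniqueness is the subtle point: any other linear right inverse $\Bo$ satisfying the same identity must have $\ran(\Bo) \subseteq \ker(\Ao)^\bot$, and combined with $\Ao \Bo = \id$ on $\ran(\Ao)$ and the injectivity of $\Ao|_{\ker(\Ao)^\bot}$, this pins $\Bo$ down as $\Ao^\plus$.

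For \ref{prop:right-h2}, I would describe the solution set of $\Ao \signal = \data$ as the affine subspace $\Ao^\plus(\data) + \ker(\Ao)$. Since $\Ao^\plus(\data) \in \ker(\Ao)^\bot$ is orthogonal to every $k \in \ker(\Ao)$, Pythagoras gives $\lnorm{\Ao^\plus(\data) + k}^2 = \lnorm{\Ao^\plus(\data)}^2 + \lnorm{k}^2$, so the norm is strictly minimized at $k = 0$, identifying $\Ao^\plus(\data)$ as the unique minimum-norm solution.

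Items \ref{prop:right-h3} and \ref{prop:right-h4} then follow almost formally. All of \ref{prop:right-h4} and one direction of \ref{prop:right-h3} are immediate from Proposition~\ref{prop:right}, since the existence of any continuous right inverse forces $\ran(\Ao)$ to be closed. For the converse direction of \ref{prop:right-h3}, when $\ran(\Ao)$ is closed both $\ker(\Ao)^\bot$ and $\ran(\Ao)$ are Banach spaces, so the bounded inverse theorem applied to the bounded linear bijection $\Ao|_{\ker(\Ao)^\bot}$ yields continuity of its inverse $\Ao^\plus$. I expect no serious obstacle; the conceptual heart of the argument sits in \ref{prop:right-h1}, specifically in using the defining identity to fix $\ran(\Ao^\plus) \subseteq \ker(\Ao)^\bot$ and thereby obtain uniqueness, with everything else being routine applications of the orthogonal decomposition and the bounded inverse theorem.
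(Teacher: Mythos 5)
Your proof is correct and follows essentially the same route as the paper's: restrict $\Ao$ to $\ker(\Ao)^\bot$, invert the resulting linear bijection onto $\ran(\Ao)$, use Pythagoras for the minimum-norm property, and invoke Propositions~\ref{prop:right} and~\ref{prop:rightlin} together with the bounded inverse theorem for the continuity statements. You actually supply more detail than the paper on the uniqueness claim --- correctly reading the defining identity as $\Ao^\plus \circ \Ao = \id - \Po_{\ker(\Ao)}$, since $\Ao \circ \Ao^\plus = \id$ holds for any right inverse --- a point the paper leaves implicit.
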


\begin{proof}
In a Hilbert space the orthogonal  complement $\ker(\Ao)^\bot$
defines a complement of $\ker(\Ao)$ and therefore   
\ref{prop:right-h1}, \ref{prop:right-h3},  \ref{prop:right-h4}
follow from Propositions  \ref{prop:right}  and \ref{prop:rightlin}.
 Item \ref{prop:right-h2} holds because   any solution 
 of the equation $\Ao (\signal) = \data$ has the form $\signal = \signal_1 + \signal_2 
 \in \ker(\Ao)^\bot \oplus \ker(\Ao)$ and 
we have  $\norm{\signal}^2 = \norm{\signal_1}^2  + \norm{\signal_2}^2$
 according to Pythagoras theorem.  
\end{proof}

In the case that $\XX$ and $\YY$ are both Hilbert spaces 
 there is a unique extension  $\Ao^\plus \colon \ran(\Ao) \oplus \ran(\Ao)^\bot \to \XX$ such that   $\Ao^\plus(\data_1 \oplus \data_2 ) = \Ao^\plus(\data_1)$ for all  $\data_1 \oplus \data_2 \in  \ran(\Ao)  \oplus  \ran(\Ao)^\bot$. The operator $ \Ao^\plus$ is referred to as the Moore-Penrose inverse of $\Ao$. For more background on generalized in inverses in Hilbert and Banach spaces see \cite{nashed1987inner}.

\subsection{Regularization methods}
\label{sec:reg}

Let $\Bo \colon \ran(\Ao) \subseteq \YY \to \XX$  be a right inverse 
of $\Ao$, set $\MM \coloneqq \ran(\Bo)$ and suppose $\MM^* \subseteq \MM$. Moreover, let $\simm \colon \YY \times \YY \to [0, \infty] $ be  some functional measuring closeness in the data space. The standard choice is  the squared norm distance $\dd_{\YY}(\data, \data^\delta)  =  \norm{\data -  \data^\delta}$ but also other choices will be considered in this chapter.      

\begin{definition}[Regularization method]\label{def:reg}
A function $\Ro \colon  (0, \infty) \times  \YY  \to \XX$ with  
\begin{equation}
\forall \signal \in \MM^*
\colon  \lim_{\delta \rightarrow 0} \sup \Bigl\{\norm{\signal -  \Ro (\delta, \data^\delta)}   \mid \data^\delta \in \YY \wedge  \simm(\Ao (\signal), \data^\delta)  \leq \delta \Bigr\} = 0 \,.
\end{equation}
is called  (convergent) regularization method for \eqref{eq:ip}  on the signal  class   $\MM^* \subseteq \MM$ with respect to the  similarity measure $\simm$. We also write $(\Ro_\delta)_{\delta >0}$ instead of $\Ro$.    
\end{definition}

The following lemma gives a useful  guideline for creating regularization 
methods based on point-wise approximations of $\Bo$.     

 \begin{proposition}[Point-wise approximations are  regularizations]\label{prop:point}
Let $(\Bo_\alpha)_{\al>0}$ be a family of continuous operators $\Bo_\alpha \colon \YY\rightarrow \XX$ that converge uniformly to $\Bo$ on    
$\Ao(\MM^*) $ as $\al \to 0$. Then, there is a function $\al_0\colon (0, \infty) \to (0, \infty)$ such that  
\begin{equation}\label{eq:reg}
	\Ro  \colon  (0,\infty) \times \YY  \to \XX \colon 
	(\delta, \data^\delta) \mapsto \Ro (\delta, \data^\delta)
	\coloneqq \Bo_{\alpha_0(\delta )}( \data^\delta)
\end{equation} 
is a regularization  method for  \eqref{eq:ip}  on the signal  class $\MM^*$ with respect to the  similarity measure $\dd_{\YY}$.  One calls $\al_0$ an a-prior parameter choice over the set $\MM^*$.  
\end{proposition}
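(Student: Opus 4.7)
The plan is to balance the approximation error against the noise amplification by designing $\alpha_0(\delta)$ as a decreasing step function. Introduce the approximation defect $\phi(\alpha) \coloneqq \sup_{\data \in \Ao(\MM^*)}\|\Bo_\alpha(\data) - \Bo(\data)\|$, which by the uniform-convergence hypothesis tends to zero as $\alpha \to 0$. I pick a decreasing null sequence $(\alpha_n)$ with $\phi(\alpha_n) \leq 1/n$, together with positive thresholds $\delta_n \downarrow 0$ to be selected below, and define $\alpha_0(\delta) \coloneqq \alpha_n$ for $\delta \in (\delta_{n+1}, \delta_n]$ and $\alpha_0(\delta) \coloneqq \alpha_1$ for $\delta > \delta_1$; then $\alpha_0(\delta) \to 0$ as $\delta \to 0$.

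For the verification, fix any $\signal \in \MM^*$ and any $\data^\delta \in \YY$ with $\|\Ao\signal - \data^\delta\| \leq \delta$. Since $\signal \in \MM^* \subseteq \MM = \ran(\Bo)$ and $\Bo$ is a right inverse, one has $\signal = \Bo(\Ao\signal)$, and the triangle inequality yields the fundamental splitting
\begin{equation*}
\|\signal - \Bo_{\alpha_0(\delta)}(\data^\delta)\| \leq \|\Bo(\Ao\signal) - \Bo_{\alpha_0(\delta)}(\Ao\signal)\| + \|\Bo_{\alpha_0(\delta)}(\Ao\signal) - \Bo_{\alpha_0(\delta)}(\data^\delta)\|.
\end{equation*}
The first (approximation) summand is bounded by $\phi(\alpha_0(\delta))$, so it vanishes as $\delta \to 0$ uniformly in $\signal$ and $\data^\delta$ by the construction of $(\alpha_n)$.

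The second (stability) summand is what drives the calibration of the thresholds $(\delta_n)$. For each $n$, the continuity of $\Bo_{\alpha_n}$ at $\Ao\signal$ furnishes some $\eta_n(\signal) > 0$ such that $\|\data' - \Ao\signal\| \leq \eta_n(\signal)$ implies $\|\Bo_{\alpha_n}(\Ao\signal) - \Bo_{\alpha_n}(\data')\| \leq 1/n$; consequently, on the stratum $\delta \in (\delta_{n+1}, \delta_n]$, where $\alpha_0(\delta) = \alpha_n$, the stability summand is at most $1/n$ as soon as $\delta_n \leq \eta_n(\signal)$. The main obstacle is precisely the $\signal$-dependence of the modulus $\eta_n$: since the thresholds $(\delta_n)$ must be chosen independently of $\signal$, one has to ensure that the continuity of $\Bo_{\alpha_n}$ is uniform enough across $\Ao(\MM^*)$ so that $\inf_{\signal \in \MM^*} \eta_n(\signal) > 0$, which is the natural situation in the applications considered below. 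Granting this uniformity, I take $\delta_n$ monotonically decreasing and no larger than this infimum, whence the total error on each stratum is bounded by $2/n$ and therefore tends to $0$ as $\delta \to 0$. This establishes that $\Ro$ is a regularization method on $\MM^*$ in the sense of Definition~\ref{def:reg}.
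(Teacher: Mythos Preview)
Your argument follows essentially the same route as the paper: split the total error via the triangle inequality into an approximation term $\|\Bo(\Ao\signal)-\Bo_{\alpha}(\Ao\signal)\|$ and a stability term $\|\Bo_{\alpha}(\Ao\signal)-\Bo_{\alpha}(\data^\delta)\|$, control the first by the uniform convergence on $\Ao(\MM^*)$, and then calibrate $\alpha_0(\delta)$ so that the continuity of the fixed $\Bo_{\alpha}$ kills the second. The paper packages the calibration slightly differently---it picks for each $\eps$ a level $\alpha(\eps)$ and a radius $\tau(\eps)$, assumes $\tau$ strictly increasing and continuous, and sets $\alpha_0 \coloneqq \alpha\circ\tau^{-1}$---whereas you use a discrete step function; this difference is purely cosmetic.

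Where the two diverge is in candor about the stability term. You explicitly flag that the modulus of continuity $\eta_n(\signal)$ is a~priori $\signal$-dependent and that one needs $\inf_{\signal\in\MM^*}\eta_n(\signal)>0$, i.e.\ uniform continuity of each $\Bo_{\alpha_n}$ on (a neighborhood of) $\Ao(\MM^*)$. The paper's proof has exactly the same hidden assumption: it writes ``choose $\tau(\eps)$ such that for all $\zsignal$ with $\|\data-\zsignal\|\leq\tau(\eps)$ we have $\|\Bo_{\alpha(\eps)}(\data)-\Bo_{\alpha(\eps)}(\zsignal)\|\leq\eps/2$'' and then uses this $\tau(\eps)$ uniformly over all $\signal\in\MM^*$ (equivalently all $\data\in\Ao(\MM^*)$), without justification. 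So the gap you identify is real, and it is present in the paper's own argument as well; you have simply been more honest about it. Under the stated hypotheses (mere continuity of $\Bo_\alpha$, no compactness of $\Ao(\MM^*)$) neither proof closes this gap, and an additional uniform-continuity or compactness assumption is needed.
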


\begin{proof}
For  any $\eps > 0$ choose
$\al(\eps)$ such    $\norm{\Bo_{\al(\eps)}(\data) - \signal} \leq \eps/2$
for  all $\signal \in \MM^*$. .
Moreover, choose $\tau(\eps)$ such that for all $\zsignal \in \YY$ with
$\norm{\data - \zsignal }  \leq  \tau(\eps)$ we have
$\norm{\Bo_{\al(\eps)} (\data) - \Bo_{\al(\eps)}(\zsignal)} \leq \eps / 2$.
Without loss of generality we can assume that $\tau(\eps)$ is strictly increasing and continuous with
$\tau(0+)=0$.  We define  $\al_0 \coloneqq \al \circ \tau^{-1}$. Then, for every $\delta >0$
and $\norm{\data - \data^\delta}  \leq  \delta$, 
\begin{align*} 
\norm{\Bo_{\al_0(\delta)}(\data^\delta) -\signal}
&\leq
\norm{\Bo_{\al_0(\delta)}(\data) -\signal}
+
\norm{\Bo_{\al_0(\delta)}(\data) - \Bo_{\al_0(\delta)}(\data^\delta)} \\
&=
\norm{\Bo_{\al \circ \tau^{-1}(\delta)}(\data) - \signal}
+
\norm{\Bo_{\al \circ \tau^{-1}(\delta)}(\data) - \Bo_{\al \circ \tau^{-1}(\delta)}(\data^\delta)} \\
&\leq
{\tau^{-1}(\delta)}/{2} + {\tau^{-1}(\delta)}/{2}  = \tau^{-1}(\delta)\,.
\end{align*}
Because $\tau^{-1}(\delta) \to 0$ as $\delta \to 0$ this completes the proof.
\end{proof}

A popular  class of regularization methods  is convex variational regularization defined by a convex functional $\reg \colon \XX \to [0, \infty]$.  These methods approximate  right inverses, given by the $\reg$-minimizing solutions of $\Ao (\signal)  = \data$. Such solutions are elements in $\argmin \{\reg(\signal) \mid \signal \in \XX \wedge \Ao (\signal) = \data\}$. Note that  an $\reg$-minimizing solution exists whenever $\XX$ is reflexive, $ \reg$ is coercive and weakly lower semi-continuous, and the  equation $\Ao (\signal)  =  \data $ has at least one solution in the domain of $\reg$. Moreover, $\reg$-minimizing solution are unique if  $ \reg$ is strictly convex. In this case this immediately defines a right inverse for $\Ao$.  Convex variational regularization  is defined by minimizing the Tikhonov functional $
\tik_{\data^\delta, \alpha } \colon \XX \to [0, \infty] \colon 
\signal \mapsto \simm (\Ao (\signal), \data^\delta) 
+ \alpha \reg(\signal) $ for  data $\data^\delta \in \YY$ and regularization parameter $\al >0$.
 In Section~\ref{sec:nett} we will study a more general  form including  non-convex regularizers defined by  a neural network. At his point,  we only state one result on convex variational regularization.

\begin{theorem}[Tikhonov regularization in Banach spaces] \label{thm:var-convex}
Let $\XX$ be reflexive, strictly convex  and  $p, q >  1$.
Moreover, suppose that $\XX$  satisfies the Radon--Riesz property; that is, for any  sequence  $(\signal_k)_{k \in \N} \in \XX^\N$  the weak convergence $\signal_k \rightharpoonup \signal \in \XX$ together with  the convergence in the norm $\norm{\signal_k} \to \norm{\signal}$ implies  $\lim_{k\to \infty} \signal_k =  \signal $ in the norm topology. 
Then the following holds:
\begin{enumerate}
\item  \label{it:var1}
$\Ao^\dag  \colon \ran(\Ao) \to \XX \colon  \data \mapsto \argmin
\set{\norm{\signal} \mid \signal \in \XX \wedge \Ao (\signal) = \data }$ is  well-defined.
\item  
\label{it:var2} For all $ \al >0$ the mapping    
$ \Bo_\al \colon  \YY \to \XX \colon \data^\delta  \mapsto   \argmin \set{ \norm{\Ao (\signal) -\data^\delta }^p+ \norm{\signal}^q \mid \signal \in \XX }$  is well defined and continuous.  
     
\item  \label{it:var4} 
For any $\al_0 \colon (0, \infty) \to 
(0, \infty)$ with  $\al_0\to 0$ and  $\delta^p/\al_0(\delta) \to 0$  as $\delta   \to 0$, the mapping defined by \eqref{eq:reg} and \ref{it:var2}  is a regularization method for \eqref{eq:ip} on $\Ao^\dag(\XX)$ with respect to the norm distance  $\dd_{\XX}$
\end{enumerate}
\end{theorem}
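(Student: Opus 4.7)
The plan is to prove the three parts in sequence using the direct method of the calculus of variations, exploiting the three hypotheses in complementary roles: reflexivity supplies weak cluster points of bounded sequences, strict convexity together with $p,q>1$ guarantees uniqueness of minimizers, and the Radon--Riesz property upgrades weak convergence to strong convergence whenever the norms converge.

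For \ref{it:var1}, I would fix $\data \in \ran(\Ao)$ and minimize $\norm{\cdot}$ over the non-empty affine closed set $S \coloneqq \Ao^{-1}(\{\data\})$. A minimizing sequence is bounded; reflexivity yields a weak cluster point $\signal^\dag$, and weak closedness of $S$ (it is convex and norm-closed) together with weak lower semicontinuity of the norm give $\signal^\dag \in S$ attaining the infimum. Strict convexity applied to the midpoint of two hypothetical minimizers forces uniqueness. For \ref{it:var2}, the Tikhonov functional $\tik_{\data^\delta,\alpha}(\signal) \coloneqq \norm{\Ao\signal - \data^\delta}^p + \alpha\norm{\signal}^q$ is convex, continuous and coercive, hence weakly lower semicontinuous; the same direct-method argument plus strict convexity of $\norm{\cdot}^q$ gives existence and uniqueness of $\Bo_\alpha(\data^\delta)$. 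Continuity of $\Bo_\alpha$ then follows by taking $\data^\delta_n \to \data^\delta$, extracting a weakly convergent subsequence from $\signal_n \coloneqq \Bo_\alpha(\data^\delta_n)$ (bounded via comparison with the test element $\signal=0$), identifying the weak limit as $\Bo_\alpha(\data^\delta)$ through an optimality-versus-weak-lsc sandwich that simultaneously forces norm convergence, upgrading to strong convergence by Radon--Riesz, and closing with a subsequence-of-a-subsequence argument.

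The heart of the proof is \ref{it:var4}. Fix $\signal^* \in \Ao^\dag(\XX)$ and write $\data \coloneqq \Ao\signal^*$. For $\data^\delta$ with $\norm{\data-\data^\delta}\leq\delta$ and $\alpha\coloneqq\alpha_0(\delta)$, set $\signal^\delta\coloneqq\Bo_\alpha(\data^\delta)$. Testing optimality against the competitor $\signal^*$ yields
\begin{equation*}
\norm{\Ao\signal^\delta - \data^\delta}^p + \alpha\norm{\signal^\delta}^q \;\leq\; \delta^p + \alpha \norm{\signal^*}^q,
\end{equation*}
whence $\norm{\signal^\delta}^q \leq \norm{\signal^*}^q + \delta^p/\alpha_0(\delta)$ and $\norm{\Ao\signal^\delta - \data^\delta}^p \leq \delta^p + \alpha\norm{\signal^*}^q \to 0$ under the parameter-choice hypothesis. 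Along any sequence $\delta_n\to 0$, reflexivity extracts a weakly convergent subsequence $\signal^{\delta_{n_k}}\rightharpoonup\tilde\signal$; weak continuity of the bounded operator $\Ao$ gives $\Ao\tilde\signal=\data$, and weak lower semicontinuity of the norm combined with the bound above gives $\norm{\tilde\signal}\leq\norm{\signal^*}$. Part \ref{it:var1} then forces $\tilde\signal=\signal^*$, and the chain $\norm{\signal^*}\leq\liminf\norm{\signal^{\delta_{n_k}}}\leq\limsup\norm{\signal^{\delta_{n_k}}}\leq\norm{\signal^*}$ delivers norm convergence, so Radon--Riesz yields strong convergence.

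The main obstacle is the uniformity built into Definition~\ref{def:reg}: convergence must hold as a supremum over \emph{all} admissible $\data^\delta$ at noise level $\delta$, not merely along a single preselected sequence. I would discharge this by a standard contradiction step --- assuming the supremum does not vanish, extract $\delta_n\to 0$ and $\data^{\delta_n}$ with $\norm{\Ao\signal^* - \data^{\delta_n}}\leq\delta_n$ violating convergence, and apply the subsequence argument above to reach a contradiction. A secondary subtlety worth flagging is that Proposition~\ref{prop:point} cannot be invoked directly, since uniform convergence of $\Bo_\alpha$ to $\Ao^\dag$ on $\Ao(\Ao^\dag(\XX))$ is not available in this generality; hence the regularization property must be derived from the minimality estimate directly, as above.
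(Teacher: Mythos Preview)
Your proposal is correct. The paper itself does not supply a self-contained proof of this theorem: its entire proof reads ``See \cite{ivanov2020theory,scherzer2009variational}.'' Your argument is precisely the standard direct-method proof developed in those references---reflexivity for weak compactness, strict convexity for uniqueness, and the Radon--Riesz property to upgrade weak-plus-norm convergence to strong convergence---so there is nothing substantive to compare.

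Two minor remarks. First, the functional in the paper's statement of \ref{it:var2} omits the factor $\alpha$ in front of $\norm{\signal}^q$; this is evidently a typo, and you have silently (and correctly) restored it, since otherwise $\Bo_\alpha$ would not depend on $\alpha$ and \ref{it:var4} would be vacuous. Second, your observation that Proposition~\ref{prop:point} is not directly applicable here---because uniform convergence of $\Bo_\alpha$ to $\Ao^\dag$ on the relevant set is not available---is well taken and is exactly why the cited references argue via the minimality estimate and a subsequence contradiction, as you do.
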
 

\begin{proof}
See \cite{ivanov2020theory,scherzer2009variational}. 
\end{proof}

In the Hilbert space setting, the mapping $\Ao^\plus$ defined In Theorem~\ref{thm:var-convex} is given by  the Moore-Penrose inverse, see Proposition \ref{prop:right-hil}   and the text below this Proposition.

\subsection{Deep learning}

In this subsection, we give a brief review of neural networks and deep learning. Deep learning can be characterized as the field where deep neural networks are used to solve various learning problems \cite{lecun2015deep,goodfellow2016deep}.  Several such methods recently appeared as a new paradigm for solving inverse problems.  
In  deep learning literature, neural networks are often formulated in a finite dimensional setting.  To allow a unified treatment we consider  here a general setting  including the  finite dimensional  as well as the infinite dimensional setting.

\begin{problem}[The Supervised learning problem]
Suppose the aim is to find an unknown function $ \NN  \colon  \YY \to \XX$ between two Banach spaces. Similar to classical regression, we are given data $(\data_i,  \signal_i) \in \YY \times \XX$ with   $\NN(\data_i)  \simeq \signal_i$ for $i=1, \dots , N$. From this data, we aim to  estimate the function  $\NN$. For that purpose one chooses a certain  class $(\NN_\theta)_{\theta \in \Theta}$ of functions  $ \NN_\theta  \colon  \YY \to \XX$ and defines $\NN \coloneqq \NN_{\theta^*} $ where $\theta^*$ minimizes the penalized  empirical risk functional    
\begin{equation} \label{eq:risk}
 	\risk_N \colon \Theta \to [0, \infty] \colon 
	\theta \mapsto   
 	\frac{1}{N} \sum_{i=1}^N  L (\NN_\theta (\data_i), \signal_i)  +  P( \theta )  \,.
\end{equation} 
Here  $L  \colon \XX \times \XX \to \R$ is the so called loss function which is a measure  for the error made by the function $\NN_\theta$ on the training examples, and $P$ is  a penalty  that prevents overfitting  of the network and also stabilizes the training process.
\end{problem}

Both, the  numerical minimization of the functional \eqref{eq:risk} and   investigating properties of $\theta^*$ as $N \to \infty$ are of interest in its own \cite{glorot2010understanding,chen2018neural}, but not subject of our analysis.  Instead, most  theory in this chapter is developed under the assumption of  suitable trained prediction functions.

\begin{definition}[Neural network]\label{def:network}
Let $\Theta$  be a parameter set and  $ H_{\ell,\theta} \colon  \XX_0 \times \cdots \times \XX_{\ell-1} \to \XX_\ell$, for $\ell = 1, \dots, L$ and $\theta \in \Theta$, be mappings between Banach spaces with $\XX_0= \Y$ and $\XX_L= \X$.   We call a family $(\NN_\theta)_{\theta \in \Theta}$ of  recursively defined  mappings    
\begin{equation}\label{eq:nn}
	\NN_\theta \coloneqq 
	a_{L,\theta}\colon \YY \to \XX
	\quad \text{ where }  
	\forall \ell \in \set{ 1, \dots, L} \colon a_{\ell,\theta}   
	= H_{\ell,\theta}(\id,a_{1,\theta},\dots, a_{\ell-1,\theta})
\end{equation} 
a neural network.
In that context, $\XX_1, \dots, \XX_{L-1}$ are called the hidden spaces. We  refer to the  individual  members $\NN_\theta$  of a neural network as neural network functions. 
\end{definition}

A neural network in finite dimension can be seen as discretization of  $(\NN_\theta)_{\theta \in \Theta}$ where $\YY$ and $\XX$ are  discretized using any standard discretization approach.         

\begin{example}[Layered neural network]
As a typical example for a neural network consider a layered neural network 
$(\NN_\theta)_{\theta \in \Theta} $ with $L$  layers between finite dimensional spaces.  In this case,  each network function has the form $
 \NN_\theta \colon \R^p \to \R^q\colon 
\data \mapsto
	(\nlo_{L}  \circ \Wo_L^\theta) \circ    \cdots \circ (\nlo_1 \circ \Wo_1^\theta)(\data) $,
where $\Wo_\ell^\theta  \colon \R^{d(\ell-1)}  \to 
\R^{d(\ell)}$ are affine mappings and  $\nlo_{\ell} \colon \R^{d(\ell)}  \to 
\R^{d(\ell)}$ are  nonlinear mappings with $d(0) =p$ and  $d(L)=q$.  The notion indicates that the affine mappings depend on the parameters $\theta \in \Theta$ while the nonlinear mappings are taken fixed. Although this is standard in neural networks, modifications where the nonlinearities contain trainable parameters have been proposed \cite{agostinelli2014learning,ramachandran2017searching}.   The affine parts $\Wo_\ell^\theta$ which are the learned parts  in  the neural network can be  represented by  a $d(\ell)  \times d(\ell-1)$ matrix for the  linear part  and a vector of size $ 1 \times d(\ell)$ for the  translation part. 
\end{example}

In  standard  neural  networks, the entries of the matrix and the bias  vector are taken as independent parameters. For typical inverse problems where the dimensions $p$ and $q$ are large, learning all these numbers  is challenging and perhaps impossible task.  For example, the matrix describing the linear  part of a  layer mapping  a  $200 \times 200$ image  to an image of the same size  already  contains $1.6$ billion parameters. Learning these parameters from data  seems challenging.   Recent neural networks and in particular  convolutional neural networks (CNNs) use the concepts of sparsity and weight sharing to significantly reduce the number of parameters to be learned. 

\begin{example}[CNNs using sparsity and weight sharing]
In order to reduce the number of free parameter between a linear mapping between  images, say of sizes  $q = n\times n$ and $p = n\times n$, CNNs  implement sparsity and weight sharing via convolution operators. In fact,  a 
 convolution operation $\Ko \colon \R^{n\times n} \to \R^{n\times n}$ with kernel size $k \times k$  is represented by $k^2$ numbers which clearly enormously reduces the number $n^4$ of parameters   required to represent a general linear mapping on  $\R^{n\times n}$. To enrich the expressive power of the neural network, actual CNN architectures use multiple-input multiple-output convolutions $\Ko \colon \R^{n\times n \times c} \to \R^{n\times n \times d}$ which uses one convolution kernel for each pair in $\set{1, \dots , c}\times \set{1, \dots , d}$ formed between each input channel and each output channel. This now increases the number of learnable parameters to $cd k^2$ but overall the number of parameters  remains much smaller than for a full dense layer between large images. Moreover, the use of multiple-input multiple-output convolutions  in combination with typical nonlinearities  introduces a flexible and complex structure which demonstrated to give state of the art results in various imaging tasks.
\end{example}

\section{Regularizing networks}
\label{sec:regnet}

Throughout this section let  $\Ao \colon \XX \to \YY$ be a linear forward operator between Banach spaces and $\Bo \colon \ran(\Ao) \to \XX$   a linear right inverse with $\UU \coloneqq  \ran(\Bo)$. In particular, the  kernel of $\Ao$ is complemented and  we can write   $\XX =  \UU \oplus \ker(\Ao)$.     
 The results in this section generalize the methods and some of the results of  \cite{schwab2019deep} from the Hilbert case to the Banach space case.

\subsection{Null-space networks} 

The idea of post-processing networks  is to improve a given right inverse    by applying a network.  Standard networks  however will destroy  data consistency of the initial reconstruction.  Null space networks are the natural class of neural networks restoring data consistency. 

\begin{definition}[Null space network]\label{def:nsn}
We call  the family $(\Id_{\X} + \Po_{\ker(\Ao)}  \circ \nullnet_\theta)_{\theta \in \Theta}$ a null space network if   $(\nullnet_\theta)_{\theta \in \Theta}$  is any network of Lipschitz continuous functions $ \nullnet_\theta  \colon \X \to \X$. We will also  refer to individual functions $\NN_\theta =  \Id_{\X} + \Po_{\ker(\Ao)}  \circ \nullnet_\theta$ as  null space networks.    \end{definition}

Any null space network $\NN_\theta =  \Id_{\X} + \Po_{\ker(\Ao)}  \circ \nullnet_\theta$ preserves data consistency  in the sense that $\Ao (\signal) = \data $ implies  $\Ao (\NN_\theta (\signal)) = \data $, which can be seen from 
\begin{equation} \label{eq:dc}
\Ao \circ \bigl( \Id_{\X} + \Po_{\ker(\Ao)}  \circ \nullnet_\theta \bigr) (\signal) 
= \Ao (\signal)  +  \Ao \circ \Po_{\ker(\Ao)}  \circ \nullnet_\theta (\signal) 
=
\data \,.
\end{equation} 
A standard residual network $\Id_\XX + \nullnet_\theta$ often used as post processing network in general does not satisfy this  property \eqref{eq:dc}.

\begin{psfrags}
\psfrag{x}{$\signal$}
\psfrag{s}{$\operatorname{ReLU}$}
\psfrag{D}{$\nullnet_\theta(\signal)$}
\psfrag{R}{$\signal  + \nullnet_\theta (\signal) $}
\psfrag{i}{identity layer}
\psfrag{w}{weight layer}
\psfrag{p}{projection layer}
\psfrag{P}{$ \Po_{\ker(\Ao)} \circ \nullnet_\theta(\signal)$}
\psfrag{N}{$\signal  + \Po_{\ker(\Ao)}  \circ \nullnet_\theta  (\signal) $}

\begin{figure}[htb!]
\begin{center}
  \includegraphics[width=0.8\columnwidth]{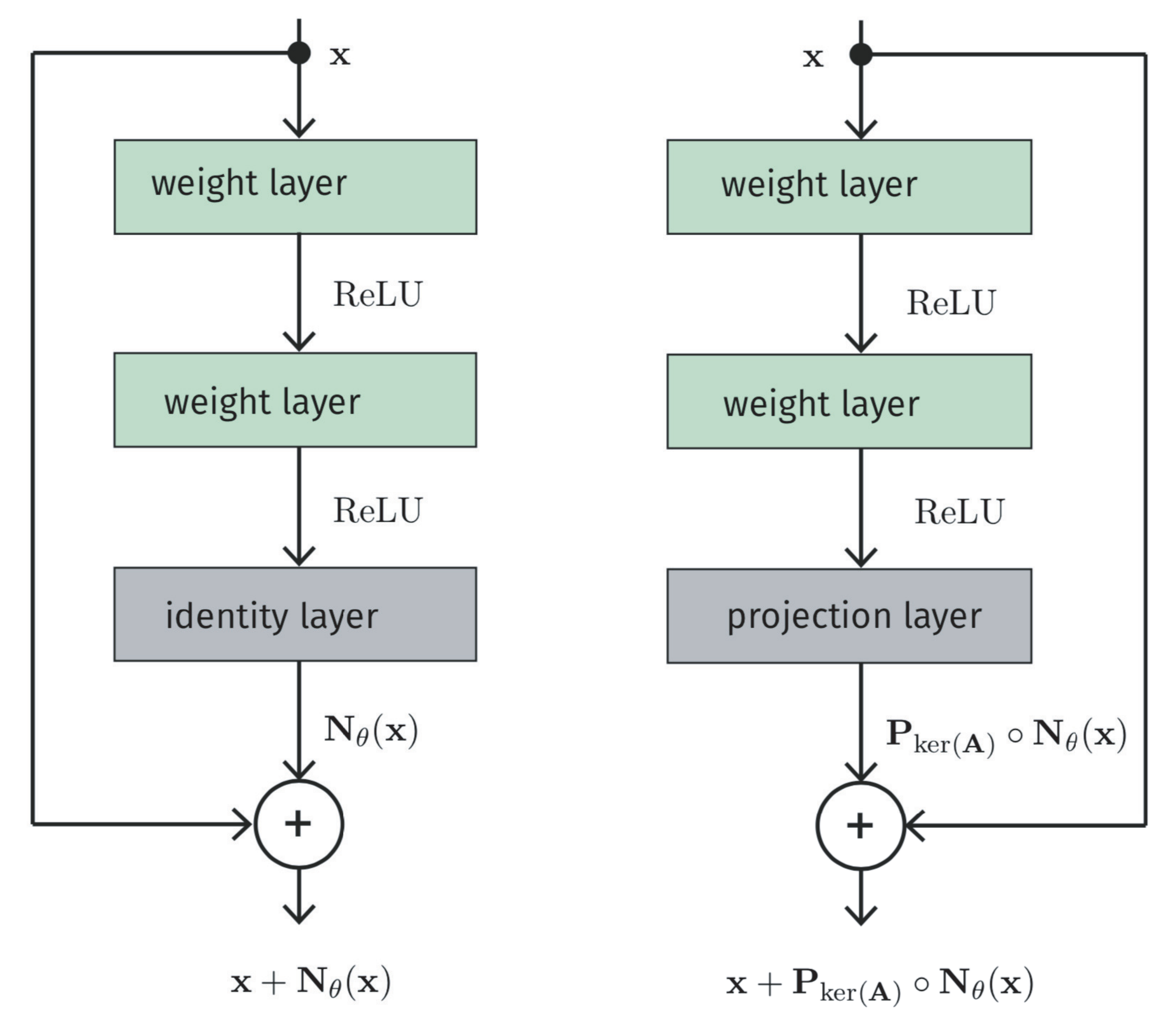}
\caption{\label{fig:RNnet} Residual network $\id  + \nullnet_\theta$ (left) versus null space network $\id  + \Po_{\ker(\Ao)}  \circ \nullnet_\theta$ (right). The difference between the two architectures  is the projection layer $\Po_{\ker(\Ao)} $ in the null space network after the last weight layer.}
\end{center}
\end{figure}
\end{psfrags}

\begin{remark}[Computation of the projection layer]
A main ingredient in the null-space network is the computation of the   
projection layer $\Po_{\ker(\Ao)}$. In some cases, it can be computed explicitly. For example, if $\Ao = \So_I \circ \Fo$ is the subsampled Fourier transform, then $\Po_{\ker(\Ao)} = \Fo^* \circ \So_I \circ \Fo$. For a general forward operator between Hilbert spaces the projection  $\Po_{\ker(\Ao)} \zz$  can be implemented via standard methods for solving linear equation. For example, using the starting value $\zz$  and solving the equation $\Ao (\signal) = 0$ with the CG (conjugate gradient) method for the normal equation or Landwebers methods gives a sequence that converges to the projection  
 $\Po_{\ker(\Ao)} \zz = \argmin\set{ \norm{\signal - \zz}  \mid  \Ao (\signal) = 0} $.\end{remark}

An example comparing a standard  residual network $\Id_\XX + \nullnet_\theta$ and a null space network $\Id + \Po_{\ker(\Ao)} \circ \nullnet_\theta$ both with two weight layers are shown in Figure~\ref{fig:RNnet}.

\begin{proposition}[Right inverses defined by  null space networks]\label{prop:null}
Let $\Bo\colon \YY \to \XX$  be a given linear  right inverse and $ \NN_\theta = \Id_{\X} + \Po_{\ker(\Ao)}  \circ \nullnet_\theta$ be a null space network. Then the composition 
\begin{equation}\label{eq:nullnet}
\NN_\theta \circ \Bo \colon \ran(\Ao) \to \XX \colon
\data \mapsto (\Id_{\X} + \Po_{\ker(\Ao)}  \circ \nullnet_\theta) ( \Bo \data) 
\end{equation} 
is  right inverse of $\Ao$. Moreover, the following assertions are equivalent:  
\begin{enumerate} [label=(\roman*)]
\item\label{prop:null1}  $ \NN_\theta \circ \Bo$  is continuous 
\item\label{prop:null2}  $ \Bo$  is continuous 
\item\label{prop:null3} $ \ran(\Ao)$  is closed.     
 \end{enumerate}
\end{proposition}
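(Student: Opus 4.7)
The plan is to first verify that $\NN_\theta \circ \Bo$ is a right inverse, and then to establish the three-way equivalence cyclically by combining Propositions~\ref{prop:right} and~\ref{prop:rightlin} with the fact that $\NN_\theta$ itself is continuous.

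For the right inverse property, I fix $\data \in \ran(\Ao)$ and apply the data consistency identity \eqref{eq:dc} to $\signal \coloneqq \Bo \data$. Because $\Bo$ is a right inverse we have $\Ao \signal = \data$, and the computation in \eqref{eq:dc} (which only uses $\Ao \circ \Po_{\ker(\Ao)} = 0$) then yields $\Ao (\NN_\theta(\Bo \data)) = \Ao(\Bo \data) = \data$. This is essentially a one-line verification, already carried out for arbitrary $\signal$ in \eqref{eq:dc}.

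For the equivalences, I first observe that $\NN_\theta$ itself is Lipschitz continuous: the standing assumption that $\ker(\Ao)$ is complemented provides a bounded linear projection $\Po_{\ker(\Ao)}$ (concretely, $\Id - \Bo \circ \Ao$ is such a projection), and $\nullnet_\theta$ is Lipschitz by Definition~\ref{def:nsn}, so the sum $\Id + \Po_{\ker(\Ao)} \circ \nullnet_\theta$ is Lipschitz. The cycle then closes quickly. The implication \ref{prop:null2}~$\Rightarrow$~\ref{prop:null1} is immediate as a composition of continuous mappings. For \ref{prop:null1}~$\Rightarrow$~\ref{prop:null3}, the composition $\NN_\theta \circ \Bo$ is a right inverse by the first step, so Proposition~\ref{prop:right} (which does not require linearity) applies and forces $\ran(\Ao)$ to be closed. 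Finally, \ref{prop:null3}~$\Rightarrow$~\ref{prop:null2} is Proposition~\ref{prop:rightlin}\ref{prop:rightlin2} applied to the linear right inverse $\Bo$.

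There is no genuine obstacle here; the proposition is essentially a bookkeeping exercise gluing the built-in data consistency of the null-space architecture to the earlier general theory of right inverses. The only point worth flagging is that $\NN_\theta \circ \Bo$ is in general a nonlinear right inverse, so the step \ref{prop:null1}~$\Rightarrow$~\ref{prop:null3} must invoke the nonlinear-friendly Proposition~\ref{prop:right} and cannot be shortcut through Proposition~\ref{prop:rightlin}\ref{prop:rightlin2}.
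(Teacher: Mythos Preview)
Your argument is correct and essentially parallel to the paper's, but you route the implication out of \ref{prop:null1} differently. The paper goes \ref{prop:null1} $\Rightarrow$ \ref{prop:null2} directly via the algebraic identity
\[
\Po_{\UU} \circ \NN_\theta \circ \Bo
= \Po_{\UU} \circ \Bo + \Po_{\UU} \circ \Po_{\ker(\Ao)} \circ \nullnet_\theta \circ \Bo
= \Bo,
\]
so that continuity of $\NN_\theta \circ \Bo$ and boundedness of $\Po_\UU$ immediately give continuity of $\Bo$; it then closes with \ref{prop:null2} $\Leftrightarrow$ \ref{prop:null3} from Proposition~\ref{prop:rightlin}. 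You instead go \ref{prop:null1} $\Rightarrow$ \ref{prop:null3} by feeding the (nonlinear) continuous right inverse $\NN_\theta \circ \Bo$ into Proposition~\ref{prop:right}, and then \ref{prop:null3} $\Rightarrow$ \ref{prop:null2} from Proposition~\ref{prop:rightlin}. Your route is arguably cleaner in that it needs no extra identity, only the results already on the shelf; the paper's route has the slight conceptual bonus of exhibiting how $\Bo$ is literally recovered from $\NN_\theta \circ \Bo$ by projecting onto $\UU$.

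One small caution: your parenthetical ``concretely, $\Id - \Bo \circ \Ao$ is such a projection'' is doing more work than it should. When $\Bo$ is discontinuous it is not obvious that $\Bo \circ \Ao$ is bounded, so this formula does not by itself certify boundedness of $\Po_{\ker(\Ao)}$. Your main sentence --- that the standing assumption that $\ker(\Ao)$ is complemented already furnishes a bounded projection $\Po_{\ker(\Ao)}$ --- is the correct justification; I would simply drop the parenthetical.
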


\begin{proof}
Because $\Bo$ is a right inverse we have $(\Ao \circ \Bo)( \signal) = \data $ for all $\data \in \ran (\Ao)$. Hence, the data consistence property \eqref{eq:dc} implies $ \Ao (  ( (\Id_{\X} + \Po_{\ker(\Ao)}\circ \nullnet_\theta  ) \circ \Bo) (\signal)) =  \data$, showing that $ \NN_\theta \circ \Bo$ is a right inverse  of $\Ao$. The implication \ref{prop:null1} $\Rightarrow$ \ref{prop:null2} 
follows from the identity $\Po_{\UU} \circ \NN_\theta \circ \Bo = \Bo$
and the continuity of the projection. The   implication \ref{prop:null2} $\Rightarrow$ \ref{prop:null3} follows from the continuity of $\NN_\theta$. Finally, the equivalence \ref{prop:null2} $\Leftrightarrow$ \ref{prop:null3}         
has been established in Proposition~\ref{prop:rightlin}. 
\end{proof}

A benefit of non-linear  right inverses  defined by null space networks 
is that  they can be adjusted  to a given image class.  A possible network training  is given as follows

\begin{remark}[Possible training strategy]
The null space  network $ \NN_\theta  =  \Id + \Po_{\ker(\Ao)} \circ \nullnet_\theta$
can be trained to map elements in $\MM$ to the elements from the  desired class of images. For  that purpose, select   
training data pairs $ (\signal_1, \zsignal_1 ), \dots, (\signal_N, \zsignal_N ) $ with  $\zsignal_i =  \Bo \circ \Ao (\signal_i)$ and minimize the regularized empirical risk, 
\begin{equation} \label{eq:err1}
\risk_N \colon \Theta  \to [0, \infty]
    \colon \theta  \mapsto  \frac{1}{N} \sum_{n=1}^N  
    \norm{ \signal_i - \NN_\theta ( \zsignal_i )  }^2
    + \norm{ \theta}_2^2 \,. 
\end{equation}
Note that for our analysis it not required that  \eqref{eq:err1} is exactly minimized. Any  null space network $ \NN_\theta$  where $\sum_{n=1}^N  
    \norm{ \signal_i - \NN_\theta ( \zsignal_i )  }^2$ is small
yields  right inverse  $ \NN_\theta \Bo $ does a better job in estimating $\signal_n$ from data $ \Ao  \signal_n$ than the original right inverse $\Bo$.
\end{remark}

Proposition \ref{prop:null} implies that the  solution of  ill-posed problems by null space networks requires the use of stabilization methods similar to the case of classical methods.  In the following subsection, we show that a the combination of null-space network  with a regularization of $\Bo$  in fact yields a regularization method on a signal class related  to the null-space network.

\subsection{Convergence analysis}

Throughout the following, let  $ \NN_\theta  =  \Id + \Po_{\ker(\Ao)} \circ \nullnet_\theta$ be null-space network and  $(\Ro_\delta)_{\delta >0}$ be a  regularization method for \eqref{eq:ip}  on the signal  class   $\UU^* \subseteq \UU$ with respect to the  similarity measure $\simm$ as introduced   in 
Definition~\ref{def:reg}.
As as illustrated in Figure~\ref{fig:null}, we  consider the family  $(\NN_\theta \circ \Ro_\delta )_{\delta >0}$ of compositions of the regularization method with the null space network.

\begin{figure}[htb!]
\begin{tikzpicture}[scale=1]
\draw[line width=1.3pt,->] (0,0)--(7,-3);
\node at (8.2,-3.4){$\UU^* \subseteq  \ran(\Bo)$};
\draw[line width=1.3pt,->] (3,-2)--(4.5,1.5);
\node at (4.5,1.8){$\ker(\Ao)$};
\draw [line width =0.6mm, lblue] plot [smooth] coordinates {(0.5,7/6) (3,1.4) (4,0.5) (5,1) (7,0.7) (8,1)};
\node at (0.5,8.3/6)[above] {$\MM^* = \NN_\theta ( \UU^*)$};
\filldraw (5,-15/7)circle(1.7pt);
\node at (4.4,-31/14)[below]{$\Ro_\delta(\data)$};
\draw [dashed, line width=1.3pt,->] (5,-15/7)--(5+2.9*3/7,-15/7+2.9*1);
\filldraw[color=black] (5+2.95*3/7,-15/7+2.95*1)circle(3pt);
\node at (5+10/7,-15/7+3.1*1)[above]{$(\NN_\theta \circ \Ro_\delta)( \data)$};
\node at (-3,0){};
\end{tikzpicture}
\caption{\label{fig:null} Linear Regularization $(\Ro_\delta)_{\delta >0}$ combined with a null space network $ \NN_\theta  =  \Id + \Po_{\ker(\Ao)} \circ \nullnet_\theta$.
We start  with a linear regularization  $\Ro_\delta \data$ and  the nullspace network $\NN_\theta  = \id + \Po_{\ker(\Ao)} \circ \nullnet_\theta $  adds  missing parts along the null space $\ker(\Ao)$.}
\end{figure}

\begin{theorem}[Regularizing null-space network]\label{thm:conv}
 For a given null-space network  $ \NN_\theta  =  \Id + \Po_{\ker(\Ao)} \circ \nullnet_\theta$ and  a given  regularization method $(\Ro_\delta)_{\delta >0}$ on the signal  class   $\UU^*$, the   family  $(\NN_\theta \circ \Ro_\delta )_{\delta >0}$ is regularization method for \eqref{eq:ip}  on the signal  class   $\NN_\theta(\UU^*)$ with respect to the  similarity measure $\simm$. 
We call $(\NN_\theta \circ \Ro_\delta )_{\delta >0}$ a regularizing null-space network.
\end{theorem}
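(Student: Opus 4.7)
The plan is to reduce the claim directly to the assumed regularization property of $(\Ro_\delta)_{\delta>0}$ on $\UU^*$, exploiting the two defining features of null-space networks: they preserve data consistency and they are Lipschitz continuous. I would first unfold the definition of a regularization method: I need to show that for every $\signal \in \NN_\theta(\UU^*)$,
\begin{equation*}
\lim_{\delta \to 0}\sup\Bigl\{\snorm{\signal - (\NN_\theta \circ \Ro_\delta)(\data^\delta)} \;\Big|\; \data^\delta \in \YY,\; \simm(\Ao(\signal),\data^\delta)\le \delta\Bigr\} = 0.
\end{equation*}

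Fix such a $\signal$ and write $\signal = \NN_\theta(\uu)$ for some $\uu \in \UU^*$. The first key step is the data-consistency identity \eqref{eq:dc}, which gives $\Ao(\signal) = \Ao(\NN_\theta(\uu)) = \Ao(\uu)$. Consequently the admissible set of noisy data $\{\data^\delta : \simm(\Ao(\signal),\data^\delta)\le \delta\}$ coincides with $\{\data^\delta : \simm(\Ao(\uu),\data^\delta)\le \delta\}$, so the hypothesis that $(\Ro_\delta)_{\delta>0}$ regularizes on $\UU^*$ yields
\begin{equation*}
\sup\Bigl\{\snorm{\uu - \Ro_\delta(\data^\delta)} \;\Big|\; \simm(\Ao(\uu),\data^\delta)\le \delta\Bigr\} \xrightarrow[\delta\to 0]{} 0.
\end{equation*}

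The second key step is to transport this convergence through $\NN_\theta$. Because $\ker(\Ao)$ is complemented in $\XX$, the projection $\Po_{\ker(\Ao)}$ associated with $\XX = \UU \oplus \ker(\Ao)$ is a bounded linear map, and by Definition~\ref{def:nsn} the map $\nullnet_\theta$ is Lipschitz with some constant $L_\theta$. Therefore $\NN_\theta = \Id_\XX + \Po_{\ker(\Ao)}\circ \nullnet_\theta$ is Lipschitz with constant $C_\theta \coloneqq 1 + \snorm{\Po_{\ker(\Ao)}} L_\theta$, giving the pointwise estimate
\begin{equation*}
\snorm{\signal - (\NN_\theta\circ \Ro_\delta)(\data^\delta)} = \snorm{\NN_\theta(\uu) - \NN_\theta(\Ro_\delta(\data^\delta))} \le C_\theta\,\snorm{\uu - \Ro_\delta(\data^\delta)}.
\end{equation*}
Taking the supremum over admissible $\data^\delta$ and the limit $\delta\to 0$ combines with the previous display to close the argument.

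I do not see a genuinely hard step: the non-obvious point is only the observation that the set of admissible perturbed data is the same for $\signal$ and for $\uu$, which is exactly what data consistency provides. The boundedness of $\Po_{\ker(\Ao)}$ (needed so that $\NN_\theta$ is Lipschitz in the Banach setting) is already built into the standing assumption $\XX = \UU \oplus \ker(\Ao)$ of Section~\ref{sec:regnet}, so no extra hypotheses are required.
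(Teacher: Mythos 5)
Your proof is correct and follows essentially the same route as the paper's: identify the pre-image $\uu\in\UU^*$ of $\signal$ (the paper writes it as $(\Bo\circ\Ao)(\signal)$, which equals your $\uu$ by data consistency and $\Bo\circ\Ao|_{\ran(\Bo)}=\Id$), note that the admissible data sets for $\signal$ and $\uu$ agree since $\Ao(\signal)=\Ao(\uu)$, and push the convergence of $\Ro_\delta$ through the Lipschitz map $\NN_\theta$. Your explicit remark on why the admissible sets coincide, and the explicit Lipschitz constant $1+\snorm{\Po_{\ker(\Ao)}}L_\theta$, only make more transparent what the paper leaves implicit.
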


\begin{proof}
Let $L$ be a Lipschitz constant of $\NN_\theta$ and recall $\NN_\theta = \Id + \Po_{\ker(\Ao)} \circ \nullnet_\theta$. For any $\signal \in  \NN_\theta(\UU^*) $ and  $ \data^\delta \in \YY$ we have
 \begin{multline*}
 \norm{ \signal -   \NN_\theta \circ \Ro_\delta (\data^\delta)}
=
\norm{   (\Id + \Po_{\ker(\Ao)} \circ \nullnet_\theta ) (\Bo \circ \Ao (\signal)) -  ( \Id + \Po_{\ker(\Ao)} \circ \nullnet_\theta )  (\Ro_\delta (\data^\delta))}
\\
\leq
L  \norm{ (\Bo \circ \Ao) (\signal) -   \Ro_\delta (\data^\delta)} \,.
 \end{multline*}
Here  we have uses  the identity $\signal = (\Id + \Po_{\ker(\Ao)} \circ \nullnet_\theta ) ((\Bo \circ \Ao) (\signal))$ 
 for   $\signal \in  \ran(\NN_\theta) $. 
 Consequently
 \begin{multline*}
 \sup \set{\norm{ \signal -   (\NN_\theta \circ \Ro_\delta) (\data^\delta)} \mid \data^\delta \in \YY \wedge \simm(\data^\delta, y) \leq \delta}
 \\ \leq
 L   \sup \set{\snorm{ (\Bo \circ \Ao) (\signal)   -      \Ro_\delta (\data^\delta) } \mid \data^\delta \in \YY \wedge  \simm(\data^\delta, y)  \leq \delta} 
  \to 0 \,.
 \end{multline*}
 In particular,  $(\NN_\theta \circ \Ro_\delta )_{\delta >0}$ is regularization method for \eqref{eq:ip}  on   
 $\NN_\theta(\UU^*)$ with respect to the  similarity measure $\simm$
\end{proof}

In Hilbert spaces a wide class of regularizing reconstruction networks can be defined by
regularizing filters.

\begin{example}[Regularizations defined by filters]
Let $\XX$ and $\YY$  be  Hilbert spaces. A family $\kl{g_\alpha}_{\al >0}$  of functions
$g_\al \colon [0,\norm{\Ao^*\circ\Ao}]  \to \R $
 is called a regularizing   filter   if it satisfies
 \begin{itemize}
\item
For all $\al >0$,  $g_\al$ is piecewise continuous;
\item
 $\exists C  >0 \colon  \sup \set{ \sabs{\la g_\al \skl{\la}} \mid \al >0  \wedge
\la \in  [0,\norm{\Ao^*\circ\Ao}] } \leq C$.
\item
$\forall \la \in (0,\norm{\Ao^*\circ\Ao}] \colon
\lim_{\al \to 0} g_\al \skl{\la} = 1/\la$.
\end{itemize}
For a given regularizing filter $\kl{g_\alpha}_{\al >0}$ define
$\Bo_\al \coloneqq g_{\al} \kl{ \Ao^*\circ\Ao} \Ao^*$.
Then for a suitable  parameter  choice $\al = \al(\delta, \data)$ the  family $(\Bo_{\al(\delta, \edot) })_{\delta > 0}$ is a regularization method on  $\ran(\Ao^{\plus})$.
Therefore,  according to Theorem~\ref{thm:conv},
the family $(\nun_\theta \circ \Bo_{\al(\delta, \edot) })_{\delta >0}$ 
is a regularization method  on $\nun_\theta  ( \ran(\Ao^{\plus}))$.
Note that in  this setting one  can  derive derive  quantitative error estimates (convergence rates); we the refer the interested reader to the original paper \cite{schwab2019deep}.      
\end{example}

\subsection{Extensions}

The regularizing null-space networks defined in Theorem~\ref{thm:conv} 
are of the form  $\NN_\theta \circ \Ro_\delta$ where $\Ro_\delta$ is a classical regularization and $\NN_\theta$ only  acts in null space of $\Ao$. In  order to better accounts for noise, it is beneficial  allowing the  networks to modify  $\Ro_\delta$ also on the complement $\UU$.       

\begin{definition}[Regularizing family of networks] \label{def:regnets}
Let  $(\Ro_\delta)_{\delta >0}$ be a  regularization method for \eqref{eq:ip}  on the signal  class   $\UU^* \subseteq \UU$ with respect to the  similarity measure $\simm$ as introduced  in  Definition~\ref{def:reg}.
A family  $(\NN_{\theta(\delta)} \circ \Ro_\delta )_{\delta >0}$ is  called regularizing family of networks if $(\NN_{\theta})_{\theta \in \Theta}$ is a neural network such that  the network functions $\NN_{\theta(\delta)}\colon \XX \to \XX$, for $\delta >0$,  are  uniformly Lipschitz continuous  and 
\begin{equation*}
\forall \zz \in \ran(\Bo)  \colon \quad 
\lim_{\delta \to  0} \NN_{\theta(\delta)} (\Ro_\delta \circ \Ao (\zsignal)) = \nullnet(\zsignal)  \,,
\end{equation*}
for some null-space network $\nullnet $.
\end{definition}

Regularizing families of networks have been introduced in \cite{schwab2020big} where it has been shown that  a regularizing family of networks defines a regularization method together with convergence rates. Moreover, an example in the form of a data-driven extension of  truncated SVD regularization has been given.    
In a finite  dimensional setting,  related extension of null-space networks named deep decomposition learning  has been introduced in  \cite{chen2019deep}. A combination  of null-space learning with shearlet reconstruction for limited angle tomography has been introduced in \cite{bubba2019learning}.  In \cite{dittmer2019projectional}, a neural network based projection approach based on approximate  data consistency  sets  has been studied. Relaxed versions  of null-space networks, where approximate data consistency is  incorporated  via   a confidence region  or a  soft penalty are proposed in \cite{huang2020data,kofler2019neural}. Finally, extensions  of the null-space approach to non-linear problems are studied  in \cite{boink2020data}.

\section{The NETT approach}
\label{sec:nett}

Let us recall that  convex variational regularization of the inverse problem \eqref{eq:ip} consists in minimizing the generalized Tikhonov functional $
\simm (\Ao (\signal), \data^\delta)  + \alpha \reg(\signal)$, where $\reg$ is a convex functional and $\simm$ a similarity measure (see Section ~\ref{sec:reg}). The regularization term $\reg$ is traditionally a semi-norm defined on a dense subspace  of $\XX$. In this section, we will extend this setup by using deep learning techniques with learned regularization functionals. 

\subsection{Learned regularization functionals}

We assume that the regularizer takes the  form
\begin{equation} 	\label{eq:cnn}
\forall \signal \in \XX \colon
	\quad
	\reg(\signal)
	=
	\reg_\theta(\signal)
	\coloneqq
	\nlf_\theta(\NN_\theta(\signal)) \,.
\end{equation}
Here $\nlf_\theta \colon  \XXX  \to [0, \infty]$  is a scalar
functional and $\NN_\theta(\edot) \colon \XX \to \XXX$   a  neural  network where $\theta \in \Theta$, for some vector space $\Theta$ containing free parameters that can be adjusted by available training data. From the representation learning point of view \cite{bengio2013representation}, $\NN_\theta(\signal)$ can be interpreted as a learned representation of $\signal$. It could be constructed in such a way that $\psi_\theta \circ \NN_\theta$ is minimal for a low dimensional manifold where  the 
true signals $\signal$  are clustered around. Finding such manifold for biomedical images has been an active research topic on manifold learning \cite{georg2008manifold,wachinger2012manifold}. Deep learning has also been used for this purpose \cite{brosch2013manifold}. A learned regularizer $\reg_\theta = \psi_\theta \circ \NN_\theta$ reflects the statistics of the signal space, which penalizes those who deviate from the data manifold.

 The similarity measure is taken as $\simm_\theta \colon \Cone \times \Cone \to [0, \infty] $, where $\Cone$ is a conic closed subset in $\YY$. It is not necessarily symmetric in its arguments. One may take $\simm_\theta(\Ao(\signal), \data)$ to be common hard-coded consistency measure such as $\simm(\Ao(\signal), \data) = \snorm{\Ao(\signal) - \data}^2$  or the Kullback-Leibler divergence (which, among others, is used in emission tomography). On the other hand, it can be a learned measure, defined via a neural network.
A learned consistency measure $\simm_\theta$ reflects the statistics in the data (measurement) space. 
It can learn to reduce uncertainty in the data measurement process, e.g. by identifying non-functional transducers.  It can also learn to reduce the error in the forward model \cite{aljadaany2019douglas}.  Finally, it may encode the range description of the forward operator, which has not been successfully exploited in inverse problems by traditional methods. In summary, it can be said that learned consistency measures have potentially high impact in solving inverse problems.

Using the neural network based learned regularizer \eqref{eq:cnn} and a learned discrepancy measure as discussed above, results in the following optimization problem 
\begin{equation}\label{eq:nett}
\argmin_{\signal \in \DD} \, \tik_\theta (\signal)\coloneqq\simm_\theta(\Ao(\signal), \data)
 + \alpha \reg_\theta(\signal) \,.  
 \end{equation}
Solving \eqref{eq:nett}  can  be seen as a neural networks-based variant of generalized Tikhonov regularization for solving \eqref{eq:ip}. Following \cite{li2018nett} we therefore call \eqref{eq:nett} the network Tikhonov (NETT) approach for solving  inverse problems.  Currently, there are two main approaches for integrating neural networks in the NETT approach  \eqref{eq:nett}: (T1) training the neural networks simultaneously with solving the optimization problem, and (T2) training the network independently before solving the optimization problem.

Approach (T1) fuses the data with a solution method of the optimization problem \eqref{eq:nett}. The resulting neural networks, therefore, depend on the method to solve the optimization. This approach enforces the neural networks to learn particular representations that are useful for the chosen optimization technique. These representations will be called \emph{solver-dependent}. The biggest advantage of this end-to-end approach is to provide a direct and relatively fast solution $\signal$ for given new data $\data$. It is commonly realized by unrolling an iterative process \cite{arridge2019solving}. The  resulting neural network is a cascade of relatively small neural networks, each of them is, possibly a variant of, those appearing in the data consistency or regularization term. It is worth noting that the neural network does not aim for representation learning. Each layer or block serves to move the approximate solution closer to the exact solution.  In contrast to typical iterative methods, each block in a unrolled neural network can be different from others.  This is explained  to speed up the convergence of the learned iterative method.  The success of this approach is an interesting phenomenon that needs further investigation. The use of neural networks to implement and accelerate iterative methods to solve traditional regularization methods has been intensively studied. We refer the reader to \cite{arridge2019solving} and the references contained therein.

The approach (T2) is more modular \cite{li2018nett,Lunz2018} and results in smaller training problems and is closer to the meaning of representation learning. The training of the regularizer may or may not depend on the forward operator $\Ao$. In the former case, the resulting representation is called \emph{model-dependent} while the latter is \emph{model-independent}. Model-dependent representation seems to be crucial in inverse problem for two reasons. The first reason is that it aligns with the inverse problem (and better serves any solution approach). Secondly, in medical imaging applications, the training signals are often not the ground truth signals. They are normally obtained with a reconstruction method from high quality data. Therefore, while training the regularizer, one should also keep in mind the reconstruction mechanism of the training data. A possible approach is to first train a baseline neural network to learn model-independent representation. Then an additional block is added on top to train for model-dependent representation. This has been shown in \cite{obmann2020sparse} to be a very efficient strategy. 

Let us mention that approach (T1) has richer literature than (T2), but less (convergence) analysis. In this section, we focus more on (T2), where we establish the convergence analysis and convergence rate in Section~\ref{S:b}. This is an extension of our works \cite{2019arXiv190803006H,obmann2020sparse}. In Section~\ref{S:a}, we review a few existing methods that are most relevant to our discussion, including some works in approach (T1). We also propose INDIE, which can be regarded as an operator inversion-free variant of the MODL technique \cite{aggarwal2018modl} and can make better use of parallel computation.

\subsection{Convergence analysis} \label{S:b}

Analysis for regularization with neural networks has been studied in \cite{li2018nett} and \cite{2019arXiv190803006H}.  In this section, we further investigate the issue. To this end, we consider the approach (T2), where the neural networks are trained independently of the optimization problem \eqref{eq:nett}. That is, $\theta = \theta^*$ is already fixed a-priori. For the sake of simplicity, we will drop $\theta$ from the notation of $\reg_\theta$ and $\simm_\theta$. We focus on how the problem depends on the regularization parameter $\alpha$ and noise level $\delta$ in the data. Such analysis in standard situations is well-studied, see, e.g., \cite{scherzer2009variational}. However, we need to extend the analysis to more general cases to accommodate the fact that $\reg$ comes from a neural network and is likely non-convex. 

Let us make several assumptions on the regularizer and fidelity term.

\begin{condition}
\label{cond:main} \mbox{}
\begin{enumerate}[leftmargin=3em,label = (A\arabic*)]

 \item \label{cond:main1} Network regularizer $\reg$: $\XX \to [0,\infty]$ satisfies
 \begin{itemize}[wide]
\item[(a)] $0 \in \Dom(\reg):= \{ \signal \mid  \reg(\signal)< \infty\}$;
\item[(b)] $\reg$ is lower semi-continuous;
\item[(c)] $\reg(\edot)$ is coercive, that is $\reg(\signal) \to \infty$
as $\norm{\signal} \to \infty$.

\end{itemize}

\item \label{cond:main2}
 Data  consistency term $\simm$: $\Cone \times \Cone \to [0,\infty]$ satisfies
 \begin{enumerate}[wide]
\item $\Dom (\simm(0,\cdot)) = \Cone$;
\item If $\simm(\data_0, \data_1) < \infty$ and $\simm(\data_1,\data_2)<\infty$ then $\simm(\data_0,\data_2)< \infty$; 
\item $\simm(\data, \data') = 0 \iff \data = \data'$; 
\item $\simm(\data, \data')\geq C \|\data - \data'\|^2$ holds in any bounded subset of $\Dom(\simm)$; 
\item For any $\data$, the function $\simm(\data, \edot)$ is continuous and coercive on its domain;
\item The functional $(\signal, \data) \mapsto \simm(\Ao(\signal), \data)$ is sequentially lower semi-continuous in the weak topology of $\XX$ and strong topology of $\YY$. 
\end{enumerate}

\end{enumerate}
\end{condition}

For \ref{cond:main1}, the coercivity condition (c) is the most restrictive. However, it can be accommodated. One such regularizer is proposed in our recent work \cite{2019arXiv190803006H} as follows
\begin{equation} \label{eq:sn}
\reg (\signal) =  \phi( \decoder ( \signal )  ) +
\frac{\beta}{2}\norm{\signal - (\ddecoder \circ  \decoder) ( \signal )}_2^2 \,.
\end{equation}
Here, $ \ddecoder \circ \decoder  \colon  \X \to \X $ is an encoder-decoder network.  The regularizer $\reg$ is to enforce that a reasonable solution $\signal$ satisfies $\signal \simeq  (\ddecoder \circ \decoder) ( \signal )$ and $\phi( \decoder ( \signal ) )$ is small.
The  term $\phi( \decoder ( \signal )  )$ implements  learned prior knowledge, which is normally a sparsity measure in a non-linear basis.
The second term $\norm{\signal - (\ddecoder \circ  \decoder) ( \signal )}_2^2$ forces $\signal$ to be close to data manifold  $\M$. Their combination also guarantees the coercivity of the regularization functional $\reg$.
Another choice for $\reg$ was suggested in \cite{li2018nett}.

For \ref{cond:main2}, $\Cone$ is a conic set in $\YY$. For any $\data \in \Cone$, we define $\Dom(\simm(\data,\edot)) = \{\data' \mid  \simm(\data, \data') < \infty\}$.  The data consistency conditions in \ref{cond:main2} are flexible enough to be satisfied by a few interesting cases. The first example is that $\simm(\data, \data') = \| \data -\data\|^2$, which is probably the most popular data consistency measure. Another case is the Kullback-Leibler divergence, which reads as follows. Let $\YY = \R^n$ and $\Ao: \XX \to \YY$ is a bounded linear positive operator.\footnote{$\Ao$ is positive if: $\data \geq  0 \Rightarrow \Ao \data \geq 0$.} Consider nonnegative cone $\Cone = \{(\data_1,\dots,\data_n) \mid  \forall i \colon \data_i \geq 0\}$. We define $\simm\colon  \Cone \times \Cone \to [0,\infty]$ by
$$\simm(\data, \data')  = \sum_{i=1}^n \data_i \log \frac{\data_i}{\data'_i} + \data'_i - \data_i.$$
It is straight forward to check that Condition~\ref{cond:main2} is satisfied in this case. In particular, item (d) has been verified in  \cite[Equation (13)] {resmerita2007joint}.

To emphasize the fact that our data is the noisy version $\data^\delta$ of $\data$, we rewrite \eqref{eq:nett} as follows 
\begin{equation} \label{eq:nett1}
\argmin_{\signal \in \DD} \, \tik_{\data^\delta,\alpha} (\signal)\coloneqq\simm(\Ao(\signal), \data^\delta)
 + \alpha \reg(\signal). 
\end{equation}
Here, $\DD$ is a weakly-closed conic set in $\XX$ such that $\A(\DD) \subseteq \Cone$.

\begin{theorem}[Well-posedness and convergence]\label{thm:well} 
Let Condition~\ref{cond:main} be satisfied.
Then the following assertions hold true:
\begin{enumerate}
\item \label{thm:well1}
\emph{Existence:} For all $\data \in \Cone$ and $\alpha >0$, there
exists a minimizer of $\tik_{\alpha;\data}$ in $\DD$.
\item \label{thm:well2}
\emph{Stability:} If $\data_k \to \data$, $\simm(\data,\data_k)< \infty$ and $\signal_k \in \argmin  \tik_{\alpha; \data_k}$,
then weak accumulation points of $(\signal_k)_{k \in \N}$ exist and are minimizers of $\tik_{\alpha;\data}$.

\item\label{thm:well3}
\emph{Convergence:}  Let $\data \in \ran(\Ao) \cap \Cone$ and
 $(\data_k)_{k \in \N}$ satisfy $\simm(\data, \data_k) \le \delta_k$ for some sequence $(\delta_k)_{k\in \N} \in (0, \infty)^\N$ with
  $\delta_k \to 0$. Suppose $\signal_k \in \argmin_\signal\tik_{\data_k, \alpha(\delta_k)}(\signal)$, and let the  parameter choice $\alpha \colon (0, \infty) \to (0, \infty) $ satisfy
\begin{equation} \label{eq:alpha}
\lim_{\delta \to 0} \alpha(\delta) = \lim_{\delta \to 0} \frac{\delta}{\alpha(\delta)} = 0 \,.
\end{equation}
Then the following holds:
\begin{enumerate}[label= (\arabic*)]
\item\label{thm:well31}
All weak accumulation points of $(\signal_k)_{k \in \N}$ are $\reg$-minimizing solutions of the equation 
$\Ao(\signal) = \data$;
\item\label{thm:well32}
$(\signal_k)_{k \in \N}$ has at least one weak  accumulation point $\signal^\plus$;
\item \label{thm:well33}
Every  subsequence $(\signal_{k(n)})_{n \in \N}$ that weakly converges to
$\signal^\plus$ satisfies  $\reg(\signal_{k(n)}) \to \reg(\signal^\plus)$;
\item\label{thm:well34}
If the $\reg$-minimizing solution of~$\Ao(\signal) = \data$ is unique, then $\signal_k \rightharpoonup \signal^\plus$.
\end{enumerate}
\end{enumerate}
\end{theorem}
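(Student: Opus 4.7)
The plan is to adapt the classical direct-method argument from variational regularization theory (as in \cite{scherzer2009variational}) to the non-convex, learned setting of Condition~\ref{cond:main}. The workhorses are: (i) coercivity of $\reg$ to obtain norm-boundedness of relevant sequences, (ii) reflexivity of $\XX$ (implicitly needed) together with weak closedness of $\DD$ to extract weakly convergent subsequences staying in $\DD$, and (iii) weak sequential lower semi-continuity of both $\reg$ and $\signal \mapsto \simm(\Ao(\signal),\data)$ to pass to limits.

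\textbf{Existence \ref{thm:well1}.} For a minimizing sequence $(\signal_k) \subset \DD$ for $\tik_{\data,\alpha}$, I would use \ref{cond:main1}(a) and \ref{cond:main2}(a) to observe $\tik_{\data,\alpha}(0) = \simm(0,\data) + \alpha\reg(0) < \infty$, so the values $\tik_{\data,\alpha}(\signal_k)$ are uniformly bounded. This forces $\reg(\signal_k)$ bounded (since $\simm\geq 0$), hence $\|\signal_k\|$ bounded by coercivity \ref{cond:main1}(c). Extract $\signal_{k(n)} \rightharpoonup \signal^*\in\DD$ and conclude $\signal^* \in \argmin \tik_{\data,\alpha}$ via weak lower semi-continuity of $\reg$ and $\signal \mapsto \simm(\Ao(\signal),\data)$ (the latter is \ref{cond:main2}(f)).

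\textbf{Stability \ref{thm:well2}.} For $\data_k \to \data$ and $\signal_k \in \argmin\tik_{\alpha;\data_k}$, compare with the candidate $0$: using \ref{cond:main2}(e), $\simm(0,\data_k)$ stays bounded along $k$, so again $\tik_{\alpha;\data_k}(\signal_k)$ is bounded uniformly, giving boundedness of $\reg(\signal_k)$ and then of $\|\signal_k\|$. Pass to a weak subsequential limit $\signal^*$. For any $\tilde\signal\in\DD$ with $\simm(\Ao(\tilde\signal),\data)<\infty$, the triangle-type property \ref{cond:main2}(b) together with continuity \ref{cond:main2}(e) gives $\simm(\Ao(\tilde\signal),\data_k)\to\simm(\Ao(\tilde\signal),\data)$, so
\begin{equation*}
\tik_{\alpha;\data}(\signal^*) \leq \liminf_n \tik_{\alpha;\data_{k(n)}}(\signal_{k(n)}) \leq \liminf_n \tik_{\alpha;\data_{k(n)}}(\tilde\signal) = \tik_{\alpha;\data}(\tilde\signal),
\end{equation*}
showing $\signal^* \in \argmin \tik_{\alpha;\data}$.

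\textbf{Convergence \ref{thm:well3}.} Fix an $\reg$-minimizing solution $\signal^\dagger$ of $\Ao(\signal)=\data$ (existence follows from \ref{thm:well1} applied in the limit; if the $\reg$-minimizing set is empty the claims are vacuous). By minimality of $\signal_k$ and the noise bound,
\begin{equation*}
\simm(\Ao(\signal_k),\data_k) + \alpha(\delta_k)\reg(\signal_k) \leq \simm(\data,\data_k) + \alpha(\delta_k)\reg(\signal^\dagger) \leq \delta_k + \alpha(\delta_k)\reg(\signal^\dagger).
\end{equation*}
The parameter choice \eqref{eq:alpha} then yields $\reg(\signal_k) \leq \delta_k/\alpha(\delta_k) + \reg(\signal^\dagger)$ bounded, hence $(\signal_k)$ is bounded by coercivity, proving \ref{thm:well32}. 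The same estimate gives $\simm(\Ao(\signal_k),\data_k) \to 0$; combined with $\simm(\data,\data_k)\to 0$ and the quadratic lower bound \ref{cond:main2}(d) on the bounded set $\{\Ao(\signal_k)\}\cup\{\data_k\}$, this yields $\|\Ao(\signal_k)-\data\|\to 0$. For any weak accumulation point $\signal^\plus$ of $\signal_{k(n)}$, weak continuity of $\Ao$ gives $\Ao(\signal^\plus) = \data$, while weak lower semi-continuity of $\reg$ combined with $\limsup\reg(\signal_{k(n)}) \leq \reg(\signal^\dagger)$ forces $\reg(\signal^\plus) \leq \reg(\signal^\dagger)$, hence $\signal^\plus$ is itself $\reg$-minimizing. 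This proves \ref{thm:well31}, and sandwiching then gives $\reg(\signal_{k(n)}) \to \reg(\signal^\plus)$, which is \ref{thm:well33}. Finally, \ref{thm:well34} follows by a subsequence-of-subsequence argument: every subsequence has a weakly convergent sub-subsequence with limit equal to the unique $\reg$-minimizing solution.

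The principal subtlety is the reliance on weak lower semi-continuity of the learned (and typically non-convex) regularizer $\reg$; this is a genuine assumption beyond mere strong lower semi-continuity and must be justified separately for concrete network architectures such as the one in \eqref{eq:sn}. Apart from this, the argument is a careful bookkeeping exercise in which the generalized discrepancy $\simm$ must be handled through \ref{cond:main2} rather than through norm identities.
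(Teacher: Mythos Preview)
Your proposal is correct and follows essentially the same direct-method argument as the paper: in each part you compare with the same test points ($0$ for \ref{thm:well1}, \ref{thm:well2}; a solution of $\Ao(\signal)=\data$ for \ref{thm:well3}), invoke coercivity of $\reg$ for boundedness, and pass to weak limits via lower semi-continuity. The only deviation worth noting is in \ref{thm:well3}\ref{thm:well31}: you reach $\Ao(\signal^\plus)=\data$ via \ref{cond:main2}(d) and strong convergence of $\Ao(\signal_k)$, whereas the paper applies \ref{cond:main2}(f) directly to obtain $\simm(\Ao(\signal^\plus),\data)\leq\liminf\simm(\Ao(\signal_k),\data_k)=0$; both routes are valid, and the paper's is slightly shorter. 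Your closing remark that \ref{cond:main1}(b) must be read as \emph{weak} sequential lower semi-continuity is well taken---the paper uses this reading implicitly throughout its proof.
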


Before starting the proof, we recall that $\signal^\plus$ is an $\reg $-minimizing solution of
the equation $\Ao  \signal = \data$ if $\signal^\plus \in \argmin \set{\reg (\signal ) \mid \signal \in \DD \wedge \Ao  \signal = \data}$. 

\begin{proof}
\ref{thm:well1} Firstly, we observe that $c:= \inf_\signal \tik_{\data, \alpha}(\signal) \leq \tik_{\data, \alpha}(0) < \infty$. Let $(\signal_k)_k$ be a sequence such that $\tik_{\data, \alpha}(\signal_k) \to c $. There exists $M>0$ such that $\tik_{\data, \alpha}(\signal_k) \leq M$, which implies $\alpha \reg(\signal_k) \leq M$. Since $\reg$ is coercive, we obtain $(\signal_k)_k$ is bounded. By passing into a subsequence, $\signal_{k_i} \rightharpoonup \signal^* \in \DD$. Due to the lower semi-continuity of $\tik_{\alpha,\edot}(\edot)$, we have $\signal^* \in \argmin \tik_{\data, \alpha}$.

\ref{thm:well2}  Since $\signal_k \in \argmin \tik_{\data, \alpha}$, it holds  $\tik_{\data_k, \alpha}(\signal_k)  \leq \tik_{\data_k, \alpha}(0)  = \simm(0, \data_k) + \alpha \reg(0)$. 
Thanks to the continuity of  $\simm(0, \edot)$ on $\Cone$, $(\simm(0,\data_k))_k$ is a bounded sequence. Therefore, 
$\alpha \reg(\signal_k) \leq \tik_{\data_k, \alpha}(\signal_k)  \leq M,$ for a constant $M$ independent of $k$. Since $\reg$ is coercive, $(\signal_k)_k$ is bounded and hence has a weakly-convergent subsequence $\signal_{k_i} \rightharpoonup \signal^\plus$. 

Let us now prove that $\signal^\plus$ is a minimizer of $\tik_{\data, \alpha}$. Since $\tik_{\data,\alpha}(\signal)$ is lower semi-continuous in $\signal$ and $\data$,  \begin{equation} \label{E:lowb} \liminf_{k_i  \to \infty} \tik_{\data_{k_i}, \alpha}(\signal_{k_i}) \geq \tik_{\data, \alpha}(\signal^\plus). \end{equation}
On the other hand, let $\signal \in \DD$ be such that $\tik_{\data, \alpha}(\signal) <\infty$. We obtain $\simm(\Ao(\signal), \data) <\infty$ and $\reg(\signal)< \infty$. Condition \ref{cond:main2}(d) and $\simm(\data,\data_k)<\infty$ give $\simm(\Ao(\signal), \data_k) < \infty$. That is, $\data_k \in \Dom(\simm(\Ao(\signal),\cdot)$. The continuity of $\simm(\Ao(\signal), \cdot)$ on its domain implies $\simm(\Ao(\signal),\data_k) \to \simm(\Ao(\signal), \data)$.  Since $\signal_k$ is the minimizer of $\tik_{\data_k, \alpha}$, $\tik_{\data_k, \alpha}(\signal_k) \leq \tik_{\data_k, \alpha}(\signal)$. Taking the limit, we obtain  $\limsup_k \tik_{\data_k, \alpha}(\signal_k) \leq \tik_{\data, \alpha}(\signal)$.
From \eqref{E:lowb}, $\tik_{\data, \alpha}(\signal^\plus) \leq \tik_{\data, \alpha}(\signal)$ for any $\signal \in \DD$. We conclude that $\signal^\plus$ is a minimizer of $\tik_{\data, \alpha}$.

\ref{thm:well3} We prove the properties item by item. 
\begin{enumerate}[label= (\arabic*)]
\item  Since $\data \in \R(\A)$, we can pick be a solution $\bar \signal$ of $\Ao(\signal) = \data$.  We have
\begin{equation} \label{eq:min} \simm(\A(\signal_k), \data_k) + \alpha_k \reg(\signal_k)  \leq \simm(\data, \data_k) + \alpha_k \reg(\bar \signal)  \leq \delta_k +  \alpha_k \reg(\bar \signal).\end{equation}
Assume that $\signal^\plus$ is a weak accumulation point of $\signal_k$, then 
$$\simm(\A(\signal^\plus), \data) \leq \lim_{k \to \infty} \inf  \simm(\A(\signal_k), \data_k)  \leq \lim_{k \to \infty} \inf (\delta_k +  \alpha_k \reg(\bar \signal)) =0.$$
Therefore, $\simm(\A(\signal^\plus), \data) =0$ or $\A(\signal^\plus) = \data$. Moreover, $\reg(\signal_k)  \leq  \delta_k/\alpha_k + \reg(\bar \signal)$, which implies
$\reg(\signal^\plus) \leq \liminf  \reg(\signal_k)  \leq  \reg(\bar \signal)$.
Since this holds for all possible solution $\bar \signal$ of $\Ao(\signal) = \data$, we conclude that $\signal^\plus$ is a $\reg$-minimizing solution of $\Ao(\signal) = \data$. 

\item Using again the inequality $\reg(\signal_k)  \leq \delta_k/\alpha_k + \reg(\bar \signal)$ and $\reg$ is coercive, we obtain $\{\signal_k\}$ is bounded. Therefore, $\{\signal_k\}$ has a weak accumulation point $\signal^\plus$. 

\item Using \eqref{eq:min} again for $\bar \signal = \signal^\plus$,  we obtain
$\reg(\signal_k)  \leq \delta_k/\alpha_k + \reg(\signal^\plus)$, which gives
$\limsup_k \reg(\signal_k)  \leq \reg(\signal^\plus)$. This together with the fact that $\reg$ is lower semi-continuous gives $\reg(\signal_{k(n)}) \to \reg(\signal^\plus)$.

\item 
The last conclusion follows straight forwardly from the above three.\qedhere \end{enumerate}
\end{proof}

Let us proceed to obtain some convergence results in the norm. Following \cite{li2018nett}, we introduce the absolute Bregman distance.

\begin{definition}[Absolute Bregman distance] \label{def:bregman}
Let $\func  \colon  \DD \subseteq \XX \to \R$
be G\^ateaux differentiable at $\signal \in \XX$.
The \emph{absolute Bregman distance}
$\B_{\func}(\edot, \signal) \colon  \DD  \to [0, \infty]$
with respect to $\func$ at  $\signal$ is defined by
\begin{equation}\label{eq:abreg} 
\forall \tilde\signal \in \XX \colon \quad
\B_{\func}(\tilde\signal, \signal) \coloneqq \abs{\func(\tilde\signal) - \func(\signal) - \func'(\signal)(\tilde\signal - \signal)} \,.
\end{equation}
Here $\func'(\signal)$ denotes the G\^ateaux derivative of $\func$ at $\signal$.
\end{definition}

From Theorem~\ref{thm:well}  we can conclude convergence of $\signal_\alpha^\delta$
to the exact solution in the absolute Bregman distance $\B_\reg$. Below we show that this
implies strong convergence under some additional assumption on the regularization
functional. For this purpose, we define the concept of total non-linearity,
which was introduce in \cite{li2018nett}.

\begin{definition}[Total non-linearity]\label{def:total}
Let $\func \colon  \DD \subseteq \XX \to \R$
be G\^ateaux differentiable at $\signal \in  \DD$.  We define the \emph{modulus of total non-linearity} of $\func$ at  $\signal$ as $\modt_{\func}(\signal, \cdot): [0,\infty) \to [0, \infty]$,
\begin{equation}\label{eq:modulus}
	\forall t > 0 \colon \quad
	\modt_{\func}(x, t) \coloneqq \inf\set{\B_{\func} (\tilde\signal, \signal)\mid \tilde\signal \in \DD \wedge \norm{\tilde\signal - \signal} = t}
	\,.
\end{equation}
The function $\func$ is called \emph{totally non-linear} at $\signal$ if $\modt_{\func}(\signal, t) > 0$ for all $t \in (0, \infty).$
\end{definition}

The following result, due to \cite{li2018nett}, connects the convergence in absolute Bregman distance and in norm

\begin{proposition}
For $\Fo \colon D \subseteq \XX \to \R$ and and any $\signal \in D$, the followings are equivalent:
\begin{enumerate}[label=(\roman*)]
\item The function $\Fo$ is totally nonlinear at at $\signal$;
\item $\forall (\signal_n)$: ($\lim_{n \to \infty} \breg_\Fo(\signal_n,\signal) = 0 \wedge (\signal_n)$ bounded) $\Rightarrow \lim_{n\to \infty} \|\signal_n - \signal\| =0$.   
\end{enumerate}
\end{proposition}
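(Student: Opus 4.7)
My plan is to prove the equivalence by establishing each direction separately, both by contraposition.

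For the direction (ii) $\Rightarrow$ (i), I would assume that $\Fo$ is not totally nonlinear at $\signal$, so some $t_0 > 0$ satisfies $\modt_\Fo(\signal, t_0) = 0$. By definition of the infimum in \eqref{eq:modulus}, I can extract a sequence $(\signal_n) \subseteq D$ with $\|\signal_n - \signal\| = t_0$ and $\breg_\Fo(\signal_n,\signal) \to 0$. This sequence is bounded (since $\|\signal_n\| \leq \|\signal\| + t_0$) yet $\|\signal_n - \signal\| = t_0 \not\to 0$, so (ii) fails. This direction is essentially immediate from the definitions.

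For (i) $\Rightarrow$ (ii), I would suppose $\Fo$ is totally nonlinear at $\signal$ and let $(\signal_n)$ be bounded with $\breg_\Fo(\signal_n,\signal) \to 0$. Arguing by contradiction, assume $\|\signal_n - \signal\| \not\to 0$, and extract a subsequence (still indexed by $n$) with $t_n := \|\signal_n - \signal\| \geq \eps$ for some $\eps > 0$; boundedness additionally gives $t_n \in [\eps, R]$ for some $R > 0$. The definition of the modulus yields $\breg_\Fo(\signal_n,\signal) \geq \modt_\Fo(\signal, t_n)$, so $\modt_\Fo(\signal, t_n) \to 0$. After a further subsequence I may assume $t_n \to t^* \in [\eps, R]$, and the target is to conclude $\modt_\Fo(\signal, t^*) = 0$, contradicting (i).

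The main obstacle is precisely this last step: transferring ``small modulus along $t_n$'' to ``vanishing modulus at the limit $t^*$'', since the map $t \mapsto \modt_\Fo(\signal, t)$ need not be continuous or monotone. The natural route is to rescale along rays by defining $\hat\signal_n := \signal + (t^*/t_n)(\signal_n - \signal)$, so that $\|\hat\signal_n - \signal\| = t^*$, and then show $\breg_\Fo(\hat\signal_n,\signal) \to 0$. For a convex $\Fo$, this follows from the standard monotonicity $\breg_\Fo((1-s)\signal + s\tilde\signal,\,\signal) \leq s\,\breg_\Fo(\tilde\signal,\signal)$ for $s \in [0,1]$, applied with $s = t^*/t_n$ (clipped to one if necessary). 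For the non-convex regularizers produced by \eqref{eq:cnn}, closing this step instead relies on continuity of $\breg_\Fo(\edot,\signal)$ along such one-dimensional rays, which is inherited whenever $\Fo$ is continuous on a neighborhood of $\signal$. Granting this, $\modt_\Fo(\signal, t^*) \leq \breg_\Fo(\hat\signal_n,\signal) \to 0$ contradicts (i) and completes the proof.
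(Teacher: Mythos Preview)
The paper itself does not include a proof of this proposition; it simply attributes the result to \cite{li2018nett} and moves on. So there is no argument in the paper to compare yours against, and your proposal has to stand on its own.

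Your direction (ii) $\Rightarrow$ (i) is clean and correct.

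The direction (i) $\Rightarrow$ (ii) has a genuine gap in the rescaling step. Two issues:

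\begin{itemize}
\item \emph{Domain.} You form $\hat\signal_n = \signal + (t^*/t_n)(\signal_n-\signal)$, but nothing in the hypotheses guarantees $\hat\signal_n \in D$. You need $D$ to be star-shaped at $\signal$ (or at least to contain these rescaled points) for the modulus inequality $\modt_\Fo(\signal,t^*)\leq \breg_\Fo(\hat\signal_n,\signal)$ to be applicable.

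\item \emph{Continuity.} Your justification that ``continuity of $\breg_\Fo(\edot,\signal)$ along one-dimensional rays'' suffices is not correct: the ray through $\signal_n$ changes with $n$, so ray-wise continuity gives you nothing about $\breg_\Fo(\hat\signal_n,\signal)-\breg_\Fo(\signal_n,\signal)$. What you actually use is $\snorm{\hat\signal_n-\signal_n}=\sabs{t^*-t_n}\to 0$, and to conclude $\breg_\Fo(\hat\signal_n,\signal)\to 0$ from this you need $\Fo$ to be \emph{uniformly} continuous on bounded subsets of $D$ (for instance, Lipschitz as in assumption \ref{b4}). Continuity on a neighbourhood of $\signal$ is not enough, because the $\signal_n$ need not converge.
\end{itemize}

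In fact, as the proposition is literally stated (only G\^ateaux differentiability of $\Fo$ at $\signal$), the implication (i) $\Rightarrow$ (ii) appears to fail. In $\ell^2$ with $\signal=0$, set $t_n=1+1/n$ and define $\Fo(\tilde\signal)=\snorm{\tilde\signal}^2$ everywhere except $\Fo(t_n e_n)=1/n$. Then $\Fo'(0)=0$, each sphere $\snorm{\tilde\signal}=t$ has $\inf\sabs{\Fo}>0$ (so (i) holds), yet $\signal_n=t_n e_n$ is bounded with $\breg_\Fo(\signal_n,0)=1/n\to 0$ and $\snorm{\signal_n}\to 1\neq 0$. So some regularity of $\Fo$ beyond the stated hypotheses is genuinely needed, and your proof should make that assumption explicit rather than gloss it as ray-continuity.

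In the convex case your monotonicity inequality is correct, but the ``clip to one'' patch does not produce a point on the sphere of radius $t^*$; a cleaner fix is to pass to a monotone subsequence of $(t_n)$ and scale towards its infimum.
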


As a consequence, we have the following convergence result in the norm topology.

\begin{theorem}[Strong Convergence] Assume that $\Ao (\signal) = \data$ has a solution, let $\reg_\theta $ be  totally nonlinear at all $\reg_\theta $-minimizing solutions of $\Ao (\signal) = \data$, and let  $(\signal_k)_{k \in \N}$, $(y_k)_{k \in \N}$, $(\alpha_k)_{k \in \N}$, $(\delta_k)_{k \in \N}$ be as in Theorem~\ref{thm:well}.
Then there is a subsequence $(\signal_{k(\ell)})_{\ell \in \N}$ of $(\signal_{k})_{k \in \N}$ and an $\reg_\theta $-minimizing solution  $\signal^\plus$ of $\Ao (\signal) = \data$ such that $\lim_{\ell \to \infty} \norm{\signal_{k(\ell)} - \signal^\plus } =0$. Moreover, if the $\reg_\theta $-minimizing solution of $\Ao (\signal) = \data$ is unique, then $\signal \to \signal^\plus$ in the norm topology.
\end{theorem}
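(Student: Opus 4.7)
The strategy is to upgrade the weak subsequential convergence produced by Theorem~\ref{thm:well}\ref{thm:well3} into strong convergence by passing through the absolute Bregman distance and then invoking the preceding Proposition that links total nonlinearity to norm convergence. First I would apply Theorem~\ref{thm:well}\ref{thm:well3}: items~\ref{thm:well32} and~\ref{thm:well33} produce a subsequence $(\signal_{k(n)})_{n\in\N}$ and a point $\signal^\plus$ with $\signal_{k(n)} \rightharpoonup \signal^\plus$ and $\reg(\signal_{k(n)}) \to \reg(\signal^\plus)$, while item~\ref{thm:well31} guarantees that $\signal^\plus$ is an $\reg$-minimizing solution of $\Ao(\signal) = \data$. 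By hypothesis $\reg$ is totally nonlinear at $\signal^\plus$, so in particular it is G\^ateaux differentiable there and $\reg'(\signal^\plus)$ is a bounded linear functional on $\XX$.

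Next I would evaluate the absolute Bregman distance at $(\signal_{k(n)}, \signal^\plus)$, using Definition~\ref{def:bregman}:
\begin{equation*}
\B_\reg(\signal_{k(n)}, \signal^\plus)
= \abs{\reg(\signal_{k(n)}) - \reg(\signal^\plus) - \reg'(\signal^\plus)(\signal_{k(n)} - \signal^\plus)} \,.
\end{equation*}
The first two terms cancel in the limit because $\reg(\signal_{k(n)}) \to \reg(\signal^\plus)$, and the final term tends to zero because $\reg'(\signal^\plus) \in \XX^*$ and $\signal_{k(n)} - \signal^\plus \rightharpoonup 0$. Hence $\B_\reg(\signal_{k(n)}, \signal^\plus) \to 0$. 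Moreover the sequence $(\signal_{k(n)})$ is bounded: this was already established inside the proof of Theorem~\ref{thm:well}\ref{thm:well3}\ref{thm:well32} from the estimate $\reg(\signal_k) \le \delta_k/\alpha_k + \reg(\bar\signal)$ combined with the coercivity assumption \ref{cond:main1}(c). The preceding Proposition, applied with $\Fo = \reg$ at $\signal = \signal^\plus$, then converts the Bregman convergence of the bounded sequence into norm convergence, $\norm{\signal_{k(n)} - \signal^\plus} \to 0$, which is the first assertion of the theorem.

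For the uniqueness statement, suppose the $\reg$-minimizing solution $\signal^\plus$ of $\Ao(\signal) = \data$ is unique. Any subsequence of $(\signal_k)$ is again a sequence of the type treated above, so the argument just sketched supplies a further subsequence that converges in norm, and by Theorem~\ref{thm:well}\ref{thm:well31} the only possible limit is $\signal^\plus$. A standard subsequence principle then forces the full sequence $(\signal_k)$ itself to converge to $\signal^\plus$ in norm.

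The part I expect to be the main obstacle is the middle step, namely showing that the linear correction $\reg'(\signal^\plus)(\signal_{k(n)} - \signal^\plus)$ vanishes along a merely weakly convergent sequence; this is what lets weak convergence plus convergence of $\reg$-values be promoted to Bregman convergence. It relies on the standard reading of G\^ateaux differentiability as producing a bounded linear functional on $\XX$, which must be confirmed in the setup used for $\reg$; everything else is then a clean assembly of Theorem~\ref{thm:well}\ref{thm:well3} with the total-nonlinearity proposition.
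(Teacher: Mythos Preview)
The paper does not actually supply a proof of this theorem; it introduces the statement with ``As a consequence, we have the following convergence result in the norm topology'' and then moves directly to convergence rates. Your argument is precisely the one the paper intends the reader to fill in: combine Theorem~\ref{thm:well}\ref{thm:well3} (weak subsequential convergence plus $\reg(\signal_{k(n)})\to\reg(\signal^\plus)$) with the preceding Proposition linking total nonlinearity to norm convergence from Bregman convergence, using boundedness from coercivity. The step you flag---that $\reg'(\signal^\plus)(\signal_{k(n)}-\signal^\plus)\to 0$ along a weakly convergent sequence---is fine under the paper's convention that G\^ateaux differentiability yields a derivative in $\XX^*$, which is also how $\reg'(\signal^\plus)$ is used in Proposition~\ref{P:upperbound}. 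So your proposal is correct and matches the paper's implicit route.
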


We now focus on the convergence rate. To this end, we make the following assumptions. 
 following assumptions:
\begin{enumerate}[leftmargin=3em,label = (B\arabic*)]
\item\label{b1}  $\Y$ is a finite dimensional space;
\item\label{b2} $\reg$ is coercive and weakly sequentially lower semi-continuous;
\item\label{b4} $\reg$ is Lipschitz;
\item\label{b5} $\reg$ is G\^ateaux differentiable.
\end{enumerate}

The most restrictive condition in the above list is that $\Ao$
has  finite-dimensional range. However, this assumption holds true in practical applications such as sparse data tomography, which is a main focus of deep learning techniques for inverse problems. For infinite dimensional space result see \cite{li2018nett}.

We start our analysis with the following result.

\begin{proposition}\label{P:upperbound}
Let \ref{b1}-\ref{b5}  be satisfied and assume that
$\signal^\plus$ is an $\reg$-minimizing solution of $\Ao (\signal) = \data$. Then there exists a constant $C>0$ such that
\begin{equation*}
\forall \signal \in \X \colon \quad
\B_\reg (\signal,\signal^\plus) \leq \reg(\signal) - \reg(\signal^\plus) + C \norm{\Ao(\signal) - \Ao(\signal^\plus)} \,.
\end{equation*}
\end{proposition}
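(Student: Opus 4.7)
The plan is to combine two ingredients: first, finite-dimensionality of $\Y$ gives a bounded approximate right inverse of $\Ao$, letting me replace an arbitrary $\signal$ by a nearby feasible point $\tilde\signal$ with $\Ao\tilde\signal = \Ao\signal^\plus$; and second, first-order optimality of $\signal^\plus$ yields a Lagrange multiplier, so that $\reg'(\signal^\plus)$ factors through $\Ao$.

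\textbf{Step 1 (feasible correction).} Since $\ran(\Ao)\subseteq\Y$ is finite-dimensional it is closed, and the open mapping theorem applied to the induced isomorphism $\X/\ker(\Ao) \to \ran(\Ao)$ yields a constant $c>0$ such that every $w \in \ran(\Ao)$ admits $v \in \X$ with $\Ao(v) = w$ and $\|v\| \leq c \|w\|$. Applied to $w = \Ao(\signal) - \Ao(\signal^\plus)$, this gives $v$ with $\|v\| \leq c \|\Ao(\signal) - \Ao(\signal^\plus)\|$, and $\tilde\signal := \signal - v$ satisfies $\Ao(\tilde\signal) = \Ao(\signal^\plus)$. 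Using \ref{b4} (with Lipschitz constant $L$) and the $\reg$-minimality of $\signal^\plus$,
\[
\reg(\signal^\plus) \,\leq\, \reg(\tilde\signal) \,\leq\, \reg(\signal) + L\|v\| \,\leq\, \reg(\signal) + Lc\,\|\Ao(\signal)-\Ao(\signal^\plus)\|.
\]
Hence $\reg(\signal^\plus)-\reg(\signal) \leq Lc\,\|\Ao(\signal)-\Ao(\signal^\plus)\|$.

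\textbf{Step 2 (Lagrange multiplier).} For any $h \in \ker(\Ao)$, the curve $\signal^\plus + t h$ satisfies $\Ao(\signal^\plus + t h) = \data$, so $\reg$-minimality gives $\reg(\signal^\plus + t h) \geq \reg(\signal^\plus)$ for all $t \in \R$; G\^ateaux differentiability \ref{b5} then forces $\reg'(\signal^\plus)(h) = 0$. Thus $\reg'(\signal^\plus)$ annihilates $\ker(\Ao)$. Since $\Y$ is finite-dimensional, $\ran(\Ao^*)$ is a finite-dimensional, hence closed, subspace of $\X^*$, and the duality $\ker(\Ao)^\perp = \overline{\ran(\Ao^*)} = \ran(\Ao^*)$ produces $\eta \in \Y^*$ with $\reg'(\signal^\plus) = \Ao^*\eta$. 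Therefore
\[
\bigl|\reg'(\signal^\plus)(\signal - \signal^\plus)\bigr| = \bigl|\eta\bigl(\Ao(\signal) - \Ao(\signal^\plus)\bigr)\bigr| \leq \|\eta\|\,\|\Ao(\signal) - \Ao(\signal^\plus)\|.
\]

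\textbf{Step 3 (assemble).} By Definition~\ref{def:bregman} and the triangle inequality,
\[
\B_\reg(\signal,\signal^\plus) \leq \bigl|\reg(\signal)-\reg(\signal^\plus)\bigr| + \bigl|\reg'(\signal^\plus)(\signal-\signal^\plus)\bigr|.
\]
Writing $|\reg(\signal)-\reg(\signal^\plus)| = \reg(\signal)-\reg(\signal^\plus) + 2\max\{0,\,\reg(\signal^\plus)-\reg(\signal)\}$ and using Step~1 to bound the positive part by $Lc\,\|\Ao(\signal)-\Ao(\signal^\plus)\|$, together with Step~2, yields
\[
\B_\reg(\signal,\signal^\plus) \leq \reg(\signal) - \reg(\signal^\plus) + \bigl(2Lc + \|\eta\|\bigr)\,\|\Ao(\signal)-\Ao(\signal^\plus)\|,
\]
so setting $C := 2Lc + \|\eta\|$ completes the proof.

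The main obstacle is producing the multiplier $\eta$ in Step~2: this is precisely where \ref{b1} is indispensable, because it guarantees $\ran(\Ao^*)$ is closed so that the functional $\reg'(\signal^\plus)$, which vanishes on $\ker(\Ao)$, is actually of the form $\Ao^*\eta$ with a finite norm bound. Without the finite-dimensional assumption, one would need an additional source condition or constraint qualification. Coercivity \ref{b2} is only used implicitly (to ensure $\reg$-minimizing solutions exist), and the Lipschitz hypothesis \ref{b4} is what makes the perturbation estimate in Step~1 go through.
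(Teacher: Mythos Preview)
Your proof is correct and follows the same three-step skeleton as the paper: build a feasible correction to bound $\reg(\signal^\plus)-\reg(\signal)$, bound the derivative term $\reg'(\signal^\plus)(\signal-\signal^\plus)$, then combine via a sign split. The implementations of the first two steps differ in interesting ways. For Step~1 the paper constructs its correction point $\signal_0$ using the orthogonal projection onto $\ker(\Ao)$, which tacitly assumes Hilbert-space structure on $\XX$; your quotient-space/open-mapping argument delivers the same estimate $\norm{\signal-\tilde\signal}\le c\,\norm{\Ao(\signal)-\Ao(\signal^\plus)}$ in any Banach space and is the cleaner formulation. For Step~2 the paper reuses $\signal_0$, decomposing $\signal^\plus-\signal=(\signal^\plus-\signal_0)+(\signal_0-\signal)$ and handling the first piece by one-sided first-order optimality and the second by $\norm{\reg'(\signal^\plus)}\norm{\signal_0-\signal}$; you instead push the optimality condition all the way to $\reg'(\signal^\plus)\big|_{\ker(\Ao)}=0$ and then invoke the annihilator identity $\ker(\Ao)^\perp=\ran(\Ao^*)$ (closed because $\YY$ is finite-dimensional) to obtain the source-condition representation $\reg'(\signal^\plus)=\Ao^*\eta$, which gives the two-sided bound in one stroke. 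Your route makes the Lagrange-multiplier structure explicit and yields the absolute-value bound directly, whereas the paper's hands-on argument avoids citing the duality $\ker(\Ao)^\perp=\overline{\ran(\Ao^*)}$ but only literally proves the one-sided inequality for the derivative term.
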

The proof follows \cite{obmann2020sparse}. We present it here for the sake of completeness. 
\begin{proof}
Let us first prove that for some constant $\gamma \in  (0, \infty)$ it holds
\begin{equation} \label{E:bounded}
\forall \signal  \in \X \colon \quad
\reg(\signal^\plus) - \reg(\signal) \leq \gamma \norm{\Ao(\signal^\plus) - \Ao(\signal) } \,.\end{equation}
Indeed, let $\Po$ be the orthogonal projection onto $\ker(\Ao)$ and define $\signal_0 = (\signal^\plus - \Po ( \signal^\plus)) + \Po (\signal)$. Then, we have $\Ao(\signal_0) = \Ao(\signal^\plus)$ and $\signal - \signal_0 \in \ker(\Ao)^\bot$.  Since the restricted operator $\Ao|_{\ker(\Ao)^\perp} \colon \ker(\Ao)^\perp \to \Y$ is injective and has finite-dimensional range, it is bounded from below by a constant $\gamma_0$. Therefore,
\begin{equation}\label{eq:aux1}
\norm{\Ao(\signal^\plus) - \Ao(\signal) } =
\norm{\Ao(\signal_0) - \Ao(\signal) }       =
\norm{\Ao(\signal_0-\signal)\| \geq \gamma_0 \|\signal_0 - \signal } \,.
\end{equation}
On the other hand, since $\signal^\plus$ is the $\reg$-minimizing solution of $\Ao (\signal) = \data$ and $\reg$ is Lipschitz, we have
$\reg(\signal^\plus) - \reg(\signal) \leq \reg(\signal_0) - \reg(x) \leq L \|\signal_0 - \signal\|$. Together with \eqref{eq:aux1}
we obtain  \eqref{E:bounded}.

Next we prove that there is a constant $\gamma_1$ such that
\begin{equation} \label{E:bounded2}
\left < \reg'(\signal^\plus), \signal^\plus - \signal \right> \leq \gamma_1 \norm{\Ao(\signal^\plus) -\Ao(\signal)} \,.
\end{equation}
Indeed, since  $\signal^\plus$ is an $\reg$-minimizing solution of $\Ao (\signal) = \data$, we obtain $\left\langle\reg'(\signal^\plus), \signal^\plus - \signal_0 \right\rangle  \leq 0$. Therefore,
\begin{multline*}\left < \reg'(\signal^\plus), \signal^\plus - \signal \right>  = \left < \reg'(\signal^\plus), \signal^\plus - \signal_0 \right> +  \left < \reg'(\signal^\plus), \signal_0 - \signal \right> \\ \leq  \left < \reg'(\signal^\plus), \signal_0 - \signal \right>  \leq \|\reg'(\signal^\plus)\| \| \signal_0 - \signal\|.\end{multline*}
Using \eqref{eq:aux1} again we obtain \eqref{E:bounded2}.

To finish the proof, we consider two cases:
\begin{itemize}
\item $ \reg(\signal^\plus) \leq \reg(\signal) \Rightarrow \abs{\reg(\signal^\plus) - \reg(\signal)} = \reg(\signal) - \reg(\signal^\plus)$
\item $\reg(\signal^\plus) \geq \reg(\signal)  \Rightarrow
\abs{ \reg(\signal^\plus) - \reg(\signal) } = \reg(\signal) - \reg(\signal^\plus)+2 (\reg(\signal^\plus) - \reg(\signal))$.
\end{itemize}
Therefore, using (\ref{E:bounded}) and (\ref{E:bounded2}), we obtain
\begin{align*} \B_\reg(\signal,\tilde \signal) &\leq \abs{\reg(\signal^\plus) - \reg(\signal)} + \abs{\left\langle \reg'(\signal^\plus), \signal - \signal^\plus\right\rangle }\\ &\leq \reg(\signal) - \reg(\signal^\plus)+ (2 \gamma + \gamma_1) \norm{\Ao (\signal) - \Ao (\signal^\plus)} \,,\end{align*}
which concludes  our proof with $C \coloneqq 2 \gamma_0 + \gamma_1$.
\end{proof}

Here is our   convergence rates result, which is an extension of~\cite[Theorem~3.1]{obmann2020sparse}. 

\begin{theorem}[Convergence rates results]Let \ref{b1}-\ref{b5}  be satisfied and suppose
  $\alpha \sim \delta$.
Then $\B_\reg(\signal_\alpha^\delta, \signal^\plus) = \mathcal{O}(\delta)$ as $\delta \to 0$.
\end{theorem}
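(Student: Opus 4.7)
The plan is to combine the pointwise upper bound from Proposition~\ref{P:upperbound} with the defining optimality of $\signal_\alpha^\delta$, and then to eliminate the residual $\|\Ao(\signal_\alpha^\delta) - \data\|$ via Young's inequality against a quadratic lower bound for $\simm$ obtained from Condition~\ref{cond:main2}(d). First, I would apply Proposition~\ref{P:upperbound} at $\signal = \signal_\alpha^\delta$; since $\Ao(\signal^\plus) = \data$, this yields
\[ \B_\reg(\signal_\alpha^\delta, \signal^\plus) \leq \bigl[\reg(\signal_\alpha^\delta) - \reg(\signal^\plus)\bigr] + C\, \|\Ao(\signal_\alpha^\delta) - \data\|. \]
Second, I would test the optimality $\tik_{\data^\delta,\alpha}(\signal_\alpha^\delta) \leq \tik_{\data^\delta,\alpha}(\signal^\plus)$ against $\signal^\plus$. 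Using again $\Ao(\signal^\plus) = \data$ this rearranges to
\[ \alpha\bigl[\reg(\signal_\alpha^\delta) - \reg(\signal^\plus)\bigr] \leq \simm(\data, \data^\delta) - \simm\bigl(\Ao(\signal_\alpha^\delta), \data^\delta\bigr). \]

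Multiplying the first display by $\alpha$ and substituting the second gives
\[ \alpha\, \B_\reg(\signal_\alpha^\delta, \signal^\plus) \leq \simm(\data, \data^\delta) - \simm\bigl(\Ao(\signal_\alpha^\delta), \data^\delta\bigr) + \alpha C\, \|\Ao(\signal_\alpha^\delta) - \data\|. \]
Next I would pass from $\simm$ to the norm. Set $t := \|\Ao(\signal_\alpha^\delta) - \data^\delta\|$. The iterates $\signal_\alpha^\delta$ stay bounded for small $\delta$ (coercivity of $\reg$ combined with the minimization inequality, as in the proof of Theorem~\ref{thm:well}\ref{thm:well1}), so Condition~\ref{cond:main2}(d) supplies $\simm(\Ao(\signal_\alpha^\delta), \data^\delta) \geq c\, t^2$, while the triangle inequality yields $\|\Ao(\signal_\alpha^\delta) - \data\| \leq t + \|\data - \data^\delta\|$. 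Inserting both and then applying Young's inequality $\alpha C t \leq c\, t^2 + (\alpha C)^2/(4c)$ to absorb the linear-in-$t$ term against the negative quadratic leaves
\[ \alpha\, \B_\reg(\signal_\alpha^\delta, \signal^\plus) \leq \simm(\data, \data^\delta) + \alpha C\, \|\data - \data^\delta\| + \frac{(\alpha C)^2}{4c}. \]

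Finally, dividing by $\alpha$ and invoking the noise estimates $\simm(\data, \data^\delta) = O(\delta^2)$ and $\|\data - \data^\delta\| = O(\delta)$, together with the coupling $\alpha \sim \delta$, each of the three terms on the right becomes $O(\delta^2)$, and the claimed bound $\B_\reg(\signal_\alpha^\delta, \signal^\plus) = O(\delta)$ follows.

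The main obstacle is the clean passage between the $\simm$-discrepancy and the norm-discrepancy: Proposition~\ref{P:upperbound} produces a residual that is only linear in $\|\Ao(\signal_\alpha^\delta) - \data\|$, whereas Condition~\ref{cond:main2}(d) offers merely a quadratic lower bound on $\simm$, so the Young step must be calibrated with the constant $c$ from (d) and the hidden constants tracked to ensure one obtains the full $O(\delta)$ rate rather than the weaker $O(\sqrt{\delta})$ that a careless application would yield.
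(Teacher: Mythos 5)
Your proposal is correct and follows essentially the same route as the paper's proof: apply Proposition~\ref{P:upperbound}, substitute the minimality of $\signal_\alpha^\delta$ for the regularizer difference, lower-bound $\simm$ by the squared norm via Condition~\ref{cond:main2}(d), and absorb the linear residual term with Young's (Cauchy's) inequality under the coupling $\alpha \sim \delta$. If anything, you are slightly more careful than the paper in tracking the constant from (d), the boundedness needed for (d) to apply, and the triangle-inequality passage from $\data$ to $\data^\delta$.
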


\begin{proof}
From Proposition~\ref{P:upperbound}, we obtain
\begin{eqnarray*} \alpha \B_\Fo(\signal_\alpha^\delta,\signal^\plus) &\leq& \alpha \reg(\signal_\alpha^\delta) - \alpha \reg(\signal^\plus) + C \alpha \|\Ao(\signal_\alpha^\delta) - \Ao(\signal^\plus)\| \\
&=& \mathcal{T}_{\alpha,\delta}(\signal_\alpha^\delta) - \simm(\Ao(\signal_\alpha^\delta), \data^{\delta}) - \Big(\mathcal{T}_{\alpha,\delta}(\signal^\plus) - \simm(\Ao(\signal^\plus),\data^{\delta}) \Big) \\ &+& C \alpha \|\Ao(\signal_\alpha^\delta) - \Ao(\signal^\plus)\| \\ &\leq& \delta^2 + C \alpha \delta - \simm(\Ao(\signal_\alpha^\delta) ,\data^\delta) + C \alpha \|\Ao(\signal_\alpha^\delta)- \data^\delta\|  
\\ &\leq& \delta^2 + C \alpha \delta - \simm(\Ao(\signal_\alpha^\delta) ,\data^\delta) + C \alpha \sqrt{\simm(\Ao(\signal_\alpha^\delta),\data^\delta)} \,.
\end{eqnarray*}
Cauchy's inequality gives
$\alpha \B_\reg(\signal_\alpha^\delta,\signal^\plus) \leq \delta^2 + C \alpha \delta + {C^2 \alpha^2} / 4$.
For $\alpha \sim \delta$, we easily conclude
$\B_\reg(\signal_\alpha^\delta,\signal^\plus) = \mathcal{O}(\delta)$.
\end{proof}

\subsection{Related methods} \label{S:a}

The use of  neural networks as regularizers or similarity measures is an active research directions. Many interesting works have been done. We briefly review several techniques: variational networks \cite{kobler2017variational},  deep cascaded networks \cite{kofler2018u,schlemper2017deep} and the MODL approach \cite{aggarwal2018modl}. Further, we propose INDIE as a new operator-inversion free variant of MODL. As opposed to the discussion in Section~\ref{S:b}, these works make use of the approach (T1): employing solver-dependent training. Finally, we will  discuss a synthesis variant of the NETT framework.

\paragraph*{Variational networks:} 
Variational networks \cite{kobler2017variational}  connect variational methods and deep learning. They are based on the fields of experts model \cite{roth2005fields} and consider the Tikhonov functional     
$$   \tik_{\data,\alpha}(\signal) =  
\sum_{c=1}^{N_c} \tik_c( \signal) \coloneqq
\sum_{c=1}^{N_c} \Bigl( \sum_j \sum_i \phi_i^c((\bar K^c_i \signal)_j)+  \alpha \sum_j \sum_i \psi_i^c((K^c_i (\Ao(\signal)-\data))_j) \Bigr) \,,
$$
where  $\bar K^c_i$ and,  $K^c_i$ are learnable convolutional operators, and $\phi_i, \psi_i$ are learnable  functionals. Alternating gradient descent method for minimizing $\tik_{\data,\alpha}$ provides the update formula
\begin{equation} \label{eq:gradient} 
\signal_{n+1} = \signal_n - \eta_n \nabla_\theta \tik_{c(n)} (\signal_n)
\quad \text{ where } c(n) = 1+ (n \mod N_c) \,.
\end{equation}
Direct calculations show
$\nabla_\theta \tik_c(\signal) = \sum_i  (\bar K^c_i)^T (\phi_i^c)' (K_i^c \signal) + \Ao^T \sum_i  (K_i^c)^T (\psi_i^c)'(\bar K_i^c(\Ao (\signal)-\data))$. Minimizing the $ \tik_{\data,\alpha}$ is then replaced  by  training the neural network that consists of a $L$ blocks realizing the iterative update \eqref{eq:gradient}.

\paragraph{Network cascades:} 
Deep network cascades \cite{kofler2018u,schlemper2017deep}  alternate  between  the application of post-processing networks and so-called data consistency layers. The data consistency condition proposed in \cite{kofler2018u} for sparse data problems $\Ao  = \So  \circ \Ao_{\rm F}$  where $\So$ is a sampling operator and $\Ao_{\rm F}$ a full data forward  operator (such as the fully sampled Radon transform) takes the  form
\begin{equation} \label{eq:DC}
\signal_{n+1}= 
\Bo_{\rm F}  \left( \argmin_{\zsignal} \norm{ \zsignal - \Ao_{\rm F}( \No_{\theta(n)} (\signal_n ) )    }_2^2 + 
\alpha \norm{  \data  - \So (\zsignal)}_2^2 \right) \,,
\end{equation}
with initial reconstruction $\signal_0  =  (\Bo_{\rm F} \circ \So^* ) (\data)$, where $\Bo_{\rm F} \colon \YY \to \XX$ is a reconstruction method for the full data forward  operator and $\No_{\theta(n)}$ are networks. For example, in MRT the operator $\Bo_{\rm F}$ is the inverse Fourier transform \cite{schlemper2017deep} and in CT the operator $\Bo_{\rm F}$ can be implemented by the filtered backprojection \cite{kofler2018u}.  The resulting neural network consists of $L$ steps of \eqref{eq:DC} that can be trained end-to-end. 

\paragraph*{MODL approach:} 
The model based deep learning (MODL)  approach of \cite{aggarwal2018modl}  starts with the Tikhonov functional $\tik_{\data,\alpha} (\signal) =  \norm{\Ao(\signal) - \data}_2^2 + \alpha \norm{\signal - \No_\theta(\signal)}_2^2 $ where   $\No_\theta(\signal)$ is interpreted as  denoising network.  By designing $\No_\theta$ as a convolutional block, then $ \signal - \No_\theta(\signal)$ is a small residual network \cite{he2016deep}. The authors of \cite{aggarwal2018modl} proposed the following heuristic iterative scheme $\signal_{n+1} = 
\argmin_\signal   \norm{ \Ao (\signal) - \data }^2 + \alpha \|\signal - \No_\theta(\signal_n)\|_2^2$ based on  $\tik_{\data,\alpha}$ whose closed-form solution is 
\begin{equation}\label{E:DC} \signal_{n+1}= (\Ao^\intercal \Ao + \alpha \Id)^{-1}(\Ao^\intercal \data + \alpha \, \No_\theta(\signal_n)) \,.
\end{equation} 
Concatenating these steps together one arrives at a deep neural network.
 Similar  to network cascades, each block  \eqref{E:DC} consists of a trainable layer $\zsignal_n = \Ao^\intercal \data + \alpha \No_\theta(\signal_n)$ and a non-trainable data consistency layer $ \signal_{n+1} = (\Ao^\intercal \Ao + \lambda \Id)^{-1}(\zsignal_n)$.

\paragraph*{INDIE approach:}
 
Let us present an alternative to the above procedures, inspired by \cite{daubechies2004iterative}. Namely we  propose the  
iterative update 
\begin{align*}
&\signal_{n+1} = \argmin \tikl_n(\signal) \\
& \tikl_n(\signal)  \coloneqq \norm{\Ao(\signal) -\data}^2 + \alpha \|\signal - \No_\theta(\signal_n)\|^2 + C \norm{ \signal-\signal_n }^2 -  \norm{\A(\signal-\signal_n)}^2 \,.
 \end{align*}
 Here  the constant $C > 0$ is an upper bound for the operator norm $\norm{\Ao}$. Elementary  manipulations show the identity  
\begin{multline*} \tikl_n(\signal) =-2\Bigl\langle \Ao^\intercal(\data- \Ao(\signal_n))+\alpha \No_\theta (\signal_n) + C \signal_n, \signal \Bigr\rangle  \\  + (\alpha+C) \|\signal\|^2 - (\alpha \|\No_\theta(\signal_n)\| + C \|\signal_n\|^2) - \|\A(\signal_n)\|^2 + \norm{\data}^2 \,.
\end{multline*}
The minimizer of $\tikl_n$ can therefore be computed  explicitly by setting the gradient of the  latter expression to zero.  This results in  the proposed network block 
\begin{equation} \label{eq:linh}
\signal_{n+1} = \frac{1}{\alpha+C}  \, \Bigl( \Ao^\intercal(\data- \Ao(\signal_n))+\alpha \No_\theta (\signal_n) + C\signal_n \Bigr) \,.
\end{equation}
This results at a deep neural network similar to the MODL iteration. However, each block in  \eqref{eq:linh} is clearly simpler than the blocks in  \eqref{E:DC}. In fact, opposed to MODL our proposed learned iterative scheme does not require costly matrix inversion. We name the resulting iteration  INDIE (for \textbf{i}\textbf{n}version-free \textbf{d}eep \textbf{i}t\textbf{e}rative) cascades. We consider the numerical comparison  of MODL and INDIE as well as the theoretical analysis of both architectures interesting lines of future research.

\paragraph*{Learned synthesis regularization:} 

Let us finish this section by pointing out that regularization by neural network is not restricted to the form (\ref{eq:nett}). For example, one can consider the synthesis version, which reads \cite{obmann2020deep}
\begin{equation} \label{eq:desryre} 
\signal^{\rm syn}  = 
\Do_\theta \Bigl( \argmin_\xi  \norm{ \Ao \circ \Do_\theta(\xi) - \data}^2 + \alpha \sum_{\lambda \in \Lambda} \omega_\lambda |\xi_\lambda|^p \Bigr) \,,\end{equation}
where $\Lambda$ is a countable set, $1 \leq p <2$,  and $\D_\theta \colon \ell^2(\Lambda) \to \XX$ is  a learned operator that performs nonlinear synthesis of $\signal$. Rigorous analysis of the above formulation was derived in \cite{obmann2020deep}. 

Finally,  note that one can generalize the frameworks \eqref{eq:nett} and \eqref{eq:desryre} by allowing the involved neural networks to depend on the regularization parameter $\alpha$ or the noise level $\delta$. The dependence on $\alpha$ has been studied in, for example, \cite{obmann2020deep}. The dependence on $\delta$ can be realized by mimicking the Morozov's stopping criteria when training the neural networks, either independently or together with the optimization problem. In the later case, $\delta$ can help to decide the depth of the unrolled neural network.

\section{Conclusion and outlook}
\label{sec:conclusion}

Inverse problems are central to the solving a wide range of important practical problems within and outside of imaging and computer vision. Inverse problems are characterized by the ambiguity and instability of their solution. Therefore, stabilizing solution methods based on regularization techniques are necessary to  solve them in reasonable way. In recent years, neural networks and deep learning have emerged as the rising stars  for  the solution of inverse problems. In this chapter, we have developed the mathematical foundations  for solving inverse problems with deep learning. In addition, we have shown stability and convergence for selected neural networks to solve inverse problems. The investigated methods, which combine the strengths of both worlds, are regularizing null-space networks and the NETT (Network-Tikhonov) approach for inverse problems.

\end{document}